\providecommand{\U}[1]{\protect\rule{.1in}{.1in}}
\newtheorem{theorem}{Theorem}
\newtheorem{lemma}[theorem]{Lemma}
\newtheorem{proposition}[theorem]{Proposition}
\newtheorem{remark}[theorem]{Remark}
\newenvironment{proof}[1][Proof]{\noindent\textbf{#1.} }{\ \rule{0.5em}{0.5em}}
\title
 {Asymptotic expansions of the Helmholtz equation solutions using approximations of the Dirichlet to Neumann operator}
\author{
Souaad Lazergui\thanks{University of Mostaghanem, Department of Pure and Applied Mathematics, B.P. 188, 27000, Algeria. lazergui.souad@gmail.com}, 
Yassine Boubendir\thanks{New Jersey Institute of Technology, Department of Mathematical Sciences, University Heights, Newark NJ 07102, USA. boubendi@njit.edu}
}
\begin{document}
\maketitle
\begin{abstract}
This paper is concerned with the asymptotic expansions of the amplitude of the solution of the Helmholtz equation.  The original expansions were obtained using a pseudo-differential 
decomposition of the Dirichlet to Neumann operator. This work uses first and second order approximations of this operator to derive new asymptotic expressions of the normal derivative of the total field. The resulting expansions can be used to appropriately choose the  ansatz in the design of  high-frequency numerical solvers, such as those based on  integral equations, in order to produce more accurate approximation of the solutions around the shadow and deep shadow regions than the ones based on  
    the usual ansatz.
      
\end{abstract}

\section{Introduction} 


Studying the Helmholtz equation at the high-frequency regime is fundamental in both the 
theoretical understanding of the corresponding solutions and the derivation of appropriate
numerical schemes. The well-know asymptotic expansions  developed by Melrose and Taylor
\cite{tay1} have significantly contributed in this matter and were the key in the design
of several high-frequency integral equation methods. Indeed,  
integral equation methods are very efficient and widely used in the solution of acoustic 
scattering problems  (see e.g. \cite{Colton-Kress:83,Chandler-WildeEtAl12} and 
the references therein).  
However, the resulting linear systems are dense, ill-conditioned and with large size in particular
when the frequency increases. Several effective strategies have been proposed to overcome these 
difficulties \cite{Colton-Kress:83,Chandler-WildeEtAl12, Amini1,BrackhageWerner,Burton,Antoine-Bendali-Darbas:05,AntoineX,Antoine,
Levadoux,Panich,br-turc,turc9,Grenn1,Rokhlin:90,TongChew10, BrunoSayas,Yimaturc,5b,1b}. 
Despite this significant progress, integral formulations are limited at higher frequencies since the
numerical resolution of field oscillations can easily lead to impractical computational times. This
is why hybrid numerical methods based on a combination of
integral equations and asymptotic methods have found an increasing interest for the solution
of high-frequency scattering problems. Indeed, the methodologies developed in this connection
that specifically concern scattering from a smooth convex obstacle were first introduced 
 in \cite{AbboudEtAl94,AbboudEtAl95}. Several other works followed these    
\cite{oscar, fatih1, fatih2, Fernando1,victor,HuybrechsVandewalle07,Giladi07,EcevitOzen14,fatihnew} 
and mainly consist of improving and analyzing this kind of numerical algorithms 
in single and multiple scattering configurations.  
All these methods are mainly based on construction
of an appropriate ansatz for the solution of integral equations in the form of a highly oscillatory
function of known phase modulated by an
unknown slowly varying envelope, which is expected to generate linear systems quasi-independent of the frequency.

\begin{figure}
\centering
\includegraphics[height=60mm]{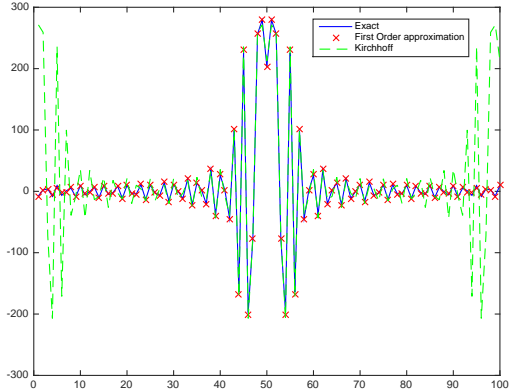} \quad 
\caption{Comparison of the exact solution of the problem \eqref{problem} with a Kirchhoff type approximation  \eqref{kir}
and a first order Bayliss-Turkel type approximation \eqref{ordre1} for the unit circle illuminated by a plane wave incidence with
$k=150$.}
\label{fig:modes}
\end{figure}

The high-frequency integral equation methods mentioned above use the 
asymptotic expansions developed  in the well-known paper by Melrose and
Taylor \cite{tay1} in the context of convex obstacles. From these expansions, an 
ansatz  is derived and incorporated into integral equations to eliminate the highly oscillatory part of the unknown which usually corresponds to the physical density, normal derivative of the total field, 
 computed on the surface
of the obstacles. This surface  
is decomposed into three regions, the illuminated and shadows regions in addition to
the deep shadow one. Each region is then numerically treated  differently and
the ansatz is set in general on the illuminated one. 
Although carefully designed, the aforementioned high-frequency integral
equation formulations result in ill-conditioned matrices 
 that limit the numerical
accuracy of the approximate solutions. 
One explanation lies in the fact that
the rapidly decaying behavior 
of the unknown density   
 in the deep shadow regions is
not incorporated into the approximation spaces as it is  not intrinsic to the
chosen ansatz. Generally speaking, it is not clear how to extract all the information needed from the leading term in the expansion given in \cite{tay1}, which restricts the construction of the ansatz.

In this paper, we derive new 
expansions of the normal derivative of the total field using
approximations of the Dirichlet to Neumann  (DtN) operator. 
The original expansions  employed a pseudo-differential 
decomposition of the DtN operator, and the related analysis focuses on the behavior  of this field  around  
the shadow boundary which leads to a  corrected formula for  the Kirchhoff approximation
 around this region  \cite{tay1}.   However, it has been shown that these expansions are valid in the entire surface of the obstacles \cite{victor,fatih1}.  Here, 
we choose first and second order approximations   of the DtN operator of the  Bayliss-Turkel type \cite{Bayliss}. 
To obtain these new expansions, we follow a similar procedure to the one given in \cite{tay1}.  
Briefly, it consists of first finding the kernel of a certain operator, which allows the computation of its amplitude,  
and then use  the stationary phase method to get the final expansions around the shadow boundary. In this case, we can use some
of the results derived by Melrose and Taylor \cite{tay1} in our analysis. The resulting expansions 
can then be used to appropriately build  an ansatz that contains the expected behavior of the solution in the three regions, namely,  the illuminated and deep shadow regions in addition to the  shadow boundaries.  This provides an improvement over the usual ansatz that behaves like Kirchhoff approximations, meaning that the corresponding  solutions are accurate  mostly  in   the illuminated regions.

 This paper is organized as follows. After reviewing the functional setting needed 
for this analysis, we state  the problem and explain  our  choice, regarding the approximation of the DtN operator, in the second section.  
The two following sections are, respectively, devoted to the derivation of  asymptotic expansions in the context of   
first and second order approximations of the DtN operator. The last section is reserved  for some conclusions.

In this work, we will use the following  functional spaces (for more details, see for instance \cite{Taylorbook,Chazarain}). 
Let $U$ be  an open bounded set of $\mathbb{R}^{n}$.
\begin{itemize}

\item $D(U)$: space of smooth test functions  with compact support, from $U$ to $\mathbb{R}^{n}$.
\item $D^{\prime }(U)$: space of distributions. 
\item $S(\mathbb{R}^{n})$: Schwartz space or space of rapidly decreasing functions on $\mathbb{R}^{n}$.
\item $S'(\mathbb{R}^{n})$: the space of tempered distributions, which is the dual space  of $S(\mathbb{R}^n)$. 
\item $\mathcal{E}^\prime$: space of compactly supported distributions. 
\item $\Psi^m$: space of pseudo-differential operators of order $m$.
\item $I^m$: space of Fourier integral operators of order $m$.
\end{itemize}

 We will also use  symbols of H$\ddot{\text{o}}$rmander's classes \cite{Hormander,Hormander1}, we say $p(x,\xi)\in S^m_{\rho,\delta}$ if and only if 
\begin{equation}
|D^\beta_xD^\alpha_\xi p(x,\xi)|\le C_{\alpha \beta}(1+|\xi|)^{m-\rho|\alpha|+\delta|\beta|}.
\end{equation}
In particular we say   $p(x,\xi)\in S^m$ if $p(x,\xi)\in S^m_{1,0}$. Note that each $p(x,\xi)$ admits an asymptotic expansion of the form
\begin{equation}
p(x,\xi)\sim \sum_{j\geq 0} p_j(x,\xi)
\end{equation}
for $|\xi|$ large where the $p_j(x,\xi)$ are homogeneous functions of degree $m-j$ in $\xi$.
Finally, if $p(x,\xi)\in S^m$, we say  $p(x,D)\in \Psi^m$, where $D$ is the corresponding pseudo-differential operator. 

\section{Model problem}

 Consider a convex obstacle $K \subset \mathbb{R}^{n+1}$ such that $B=\partial K\subset\mathbb{R}^{n+1}$  is a hypersurface and let $\Omega$ be the exterior domain given by $\Omega =\mathbb{R}^{n+1}\backslash K$.  We are interested  in solutions of the following wave equation
\begin{equation}\label{prob}
\left\{ \begin{array}{l}  
\displaystyle   (\partial^{2}_{t} -\Delta ) u(x,t)=0 \quad \mbox{in}\  
 \Omega \times \mathbb{R}, \\ [4pt] 
u(x,t)=-\delta(t-x \cdot \omega) = - u^{i} (x,t) \quad \text{on}\ B \times \mathbb{R},
\end{array}\right.
\end{equation}
where $u^i$ is the incident wave, $\omega$ denotes the incidence direction, $\delta$ is the Dirac function, and  $u\in D'(B\times \mathbb{R})$  \cite{tay1}. 
Defining the function $w$ by
\begin{equation}
w(x,k)=\int e^{ikt}u(x,t)dt\label{transfu}, 
\end{equation}
leads to the well-posed problem \cite{Wilcox}
\begin{equation}
\left\{ 
\begin{array}{ll}
(\Delta+k^{2})w(x,k)=0 \quad  \mbox{in}\ 
 \Omega \times \mathbb{R}, \\ [4pt]
w(x,k)=-e^{ikx \cdot \omega } \quad \text{on}\ B \times \mathbb{R},  \\ [4pt]
w(x,k)=\mathcal{O} (|x|^{-n/2})\ \mbox{and}\   \displaystyle (\partial_{|x|}-ik)w(x,k)=o (|x|^{-n/2})\ \mbox{for}\ |x|\rightarrow \infty.
\end{array}
\right.\label{problem}
\end{equation}
 For each $x\in \partial K=B$, ${\bf n}$ denotes the outgoing normal vector.
In what follows, we use the notation $w^t=w^s + w^i$ indicating the total field, where
$w^s$ is the scattered field solution of the problem \eqref{problem} and 
$w^i=e^{ikx \cdot \omega }$. In addition, we write the normal derivative of the total field as 
\begin{eqnarray}
\nonumber {\partial_{\mathbf n}} w^t= {\partial_{\mathbf n}} w^s +{\partial_{\mathbf n}} w^i\\[2pt]
 =a_Q (x,k)e^{ik x \cdot \omega},\hspace{-0.2cm}
 \end{eqnarray}
that we express with respect to the problem \eqref{prob}, and using the same notation given in  \cite{tay1}, by the operator
\begin{equation}
Q= \left (\mathrm{DtN} + (\omega \cdot {\bf n}) \partial_t) \right)F: \  \mathcal{E'}(B\times\mathbb{R})\rightarrow \mathcal{D'}(B\times \mathbb{R}),\label{OQ}
\end{equation}
where  $\mathrm{DtN} \in \Psi^1$ \cite{tay1} stands for the Dirichlet to Neumann operator 
\begin{equation}
\mathrm{DtN} :\mathcal{E}^{\prime }( B\times \mathbb{R})\rightarrow D^{\prime }(
B\times \mathbb{R}), \ \mathrm{DtN} u^i= -{\partial_{\bf n}}u|_{B \times \mathbb{R}},
\end{equation}
and $F$ is a  Fourier
integral operator to be defined later. Using a pseudo-differential decomposition of this operator DtN, Melrose and Taylor derived the well-known expansion 
\begin{equation}
\partial_{{\bf n}} w^t \sim  \sum_{p,l=0}^\infty k^{2/3-p-2l/3}a_{p,l}(\omega,x)\Psi^{(l)}(k^{1/3}Z(\omega,x))e^{ikx\cdot \omega}, \label{exporg}
\end{equation}
where $\Psi(\tau) \sim \sum_{j=0}^{\infty} c_{j} \tau^{1-3j}$ as $\tau \to \infty$, and it is rapidly
decreasing in the sense of Schwartz as $\tau \to -\infty$. The real-valued function $Z$ is
positive on the illuminated region, negative on the shadow region and vanishes precisely to
first order on the shadow boundary \cite{tay1}. Here, $a_{p,l}$ result from the application of the stationary phase method and the expansion of the 
symbol of the operator $Q$ \cite{tay1}.


\begin{remark}
As is mentioned in \cite{tay1},  the first term of  
the expansion \eqref{exporg} represents the classical Kirchhoff approximations.
 Indeed, if $\Psi(k^{1/3}Z(\omega,x))$ is replaced by the leading term in its asymptotic expansion 
\begin{equation}
\Psi(\tau)\simeq -2i\tau, \textsf{    for    }\tau \to +\infty, 
\end{equation}
 and taking  $
 a_{00}(\omega,x)=({\bf n} \cdot\omega) /{Z(\omega,x)}$  
 in the illiminated region $Z(\omega,x)>0$ 
  $({\bf n} \cdot \omega<0)$, we obtain   
  \begin{equation}\label{kir} 
    \partial_{{\bf n}} w^{t} \simeq k^{2/3}\frac{{\bf n} \cdot \omega}{Z(\omega,x)}(-2ik^{1/3}Z(\omega,x))e^{ikx \cdot \omega}=2ik {\bf n} \cdot \omega 
  e^{ikx \cdot \omega}. 
  \end{equation}
\end{remark}

Generally speaking, the aforementioned high-frequency integral equation methods, based on an  ansatz of the form $\partial_{{\bf n}} w^t=\eta(x) e^{ik x\cdot  \omega}$, 
although delivering better accuracy than the Kirchhoff approach in the illuminated region, they were designed  replicating
its behavior,  and this explains why the decay in the deep shadow region is not observed but somehow forced. 
This is partly due to the fact that an explicit form of the leading term in the asymptotic expansion \eqref{exporg}
is not available. We propose in this work to use  
approximations of the DtN
map to derive new asymptotic expressions of $\partial_{\bf n} w^t$. 
This can allow construction of a new    
 ansatz in order to improve the behavior and the accuracy of the solution  in the shadow and the deep shadow regions.  
 We use  first and second order approximations of the DtN operator given by Bayliss-Turkel \cite{Bayliss}  
\begin{eqnarray}
\label{ordre1} {\partial_{\bf n}}w^s(x,k) = -ikw^i(x,k)+\frac{c(x)}{2}w^i(x,k), \hspace{6cm} \\ [4pt]
{\partial_{\bf n}}w^s(x,k ) = -ikw^i(x,k)+\frac{c(x)}{2}w^i(x,k)-\frac{c(x)^{2}}{8(c(x)-ik)}w^i(x,k)-\frac{1}{2(c(x)-ik)}\partial _{x}^{2}w^i(x,k), \label{ordre2}
\end{eqnarray}
where $c(x)$ represents the curvature.  Although these conditions are approximations
of the DtN operator, we use the sign "=" for the sake of the presentation.

The motivation behind this choice of conditions \eqref{ordre1} and \eqref{ordre2} is summarized in the next example. 
Suppose that  $\Omega$ is a circle,  
in this case the exact solution  of problem \eqref{problem} is given by Bessel functions. 
We then compute the quantity
\begin{equation} \label{}
\partial_{\bf n} w^t =\partial_{\bf n} w^s +\partial_{\bf n} e^{ik x \cdot \omega}
\end{equation}
 using, (1)  the exact solution, (2)  the first order approximation \eqref{ordre1} approximating 
$\partial_{\bf n} w^s$,  and (3)  the approximations \eqref{kir} which corresponds exactly  to the  Kirchhoff approximation in the illuminated region. 
The resulting  calculations are  exhibited in Figure \ref{fig:modes}.
 We can observe that using the formula   \eqref{kir} produces a good approximation in
the illuminated region, but starts to degrade in the shadow boundaries to completely deteriorate in the deep shadow region in contrast with the  solution, based on condition \eqref{ordre1}, where we can observe a satisfactory approximation in those regions. This explains why in the Kirchhoff approximation
the solution is taken as zero  in the shadow  and deep shadow regions. However,  imposing this kind of constraints  leads to inaccurate solutions in the high frequency integral formulations.  

\begin{remark}\label{rem1}
	The  condition  \eqref{ordre1} was derived in the case of two and three dimensions while the condition \eqref{ordre2} was derived only in the two dimensional case \cite{Turkel1}.  Our computations do not distinguish between these two cases. However, at the end of this analysis, we  give an example of an expansion derived for a three dimensional second order approximation of the DtN operator. 
\end{remark}

%
%
%


\section{Expansion of the amplitude using the first order approximation}

 The results produced in this paper are based on some of the 
 results derived  in 
 the paper \cite{tay1}. 
  Our analysis starts by determining the kernel
 associated with the operator \eqref{OQ} in the case where the DtN operator is approximated by  \eqref{ordre1}.  
 For the sake of the
 presentation, we use the notation $\mathcal{C}(x) =c(x)/2$. 
  
\begin{theorem}
Let  $K\subset\mathbb{R}^{n+1}$ be a strictly convex bounded  obstacle such that $\partial K=B$, where 
$B$ is  $C^{\infty }$ hypersurface in  $\mathbb{R}^{n+1}$. Suppose that $ \Omega $ is  an open set of  $\mathbb{R}^{n+1}$ such that $\Omega =\mathbb{R}^{n+1}/K$, and let $w^s$ be a solution of (\ref{problem}).
Using the approximation (\ref{ordre1}), the operator
$Q$ \eqref{OQ} can be written as
\begin{equation}
Q=((1-{\bf n} \cdot \omega) \partial_t+\mathcal{C}(x))F: \ \mathcal{E'}(B\times\mathbb{R})\rightarrow\mathcal{D'}(B\times\mathbb{R}), \label{Qorder1}
\end{equation}
where $\kappa_Q (x,t)$ is its kernel given by
\begin{equation}
\kappa_Q (x,t)=((1-{\bf n} \cdot \omega) \partial_t+\mathcal{C}(x))\kappa_F(x,t),
\end{equation}
and $\kappa_F(x,t)=\delta(t-\omega \cdot x)$ is the kernel of the  Fourier integral operator $F$.
\end{theorem}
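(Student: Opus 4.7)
The plan is to substitute the first-order Bayliss--Turkel approximation \eqref{ordre1} into the definition \eqref{OQ} of $Q$ and simplify. Because \eqref{ordre1} lives in the frequency domain, the first step is to pass it to the time domain via the transform \eqref{transfu}: under the correspondence $-ik \leftrightarrow \partial_t$, the identity $\partial_{\bf n}w^s = -ik\, w^i + \mathcal{C}(x)\, w^i$ translates into $\partial_{\bf n} u^s = \partial_t u^i + \mathcal{C}(x)\, u^i$ on $B\times\mathbb{R}$. Combined with the definition $\mathrm{DtN}\, u^i = -\partial_{\bf n}u|_{B\times\mathbb{R}}$ and the fact that $u=u^s$ in problem \eqref{prob}, this displays $\mathrm{DtN}$, restricted to incident data of this form, as a first-order differential operator in $t$ with a zeroth-order coefficient controlled by the curvature.

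Next I would substitute this time-domain expression into $Q=(\mathrm{DtN}+(\omega\cdot{\bf n})\partial_t)F$ and collect terms: the $\partial_t$ contribution coming from $\mathrm{DtN}$ and the explicit $(\omega\cdot{\bf n})\partial_t$ correction combine into a single coefficient proportional to $(1-{\bf n}\cdot\omega)$, while the $\mathcal{C}(x)$ term survives unchanged, yielding exactly \eqref{Qorder1}. The kernel claim is then essentially automatic: $F$ is the Fourier integral operator whose Schwartz kernel on $B\times\mathbb{R}$ is $\kappa_F(x,t)=\delta(t-\omega\cdot x)$, so left-composing $F$ with the differential operator $(1-{\bf n}\cdot\omega)\partial_t + \mathcal{C}(x)$ acting in the output variables $(x,t)$ pushes through to the kernel and delivers $\kappa_Q(x,t)=((1-{\bf n}\cdot\omega)\partial_t+\mathcal{C}(x))\kappa_F(x,t)$.

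The main obstacle is purely a matter of sign bookkeeping: the convention chosen for $\mathrm{DtN}$, the sign attached to $\partial_t$ in the Fourier correspondence implied by \eqref{transfu}, and the distributional chain-rule identity $\partial_{\bf n}\delta(t-\omega\cdot x) = -(\omega\cdot{\bf n})\, \partial_t\delta(t-\omega\cdot x)$ must all be tracked consistently in order for the coefficient of $\partial_t$ to collapse precisely to $(1-{\bf n}\cdot\omega)$ rather than one of the nearby alternatives $\pm(1\pm{\bf n}\cdot\omega)$. Once this accounting is made explicit, no analytic difficulty remains: the argument is a direct substitution of \eqref{ordre1}, one transcription from frequency to time, and a one-line computation on the distributional plane wave $\delta(t-\omega\cdot x)$.
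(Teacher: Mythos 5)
Your proposal is correct and follows essentially the same route as the paper: the paper substitutes \eqref{ordre1} into $\partial_{\bf n}w^t$ in the frequency domain to get $(-ik(1-{\bf n}\cdot\omega)+\mathcal{C}(x))e^{ikx\cdot\omega}$, then reads off the kernel via a distributional Fourier transform in $k$ using exactly the correspondences $\widehat{ikf}\leftrightarrow-\partial_t\widehat f$ and $\widehat{e^{ik\omega\cdot x}}\leftrightarrow\delta(t-\omega\cdot x)$ that you invoke. The only cosmetic difference is that you transcribe to the time domain before collecting the $\partial_t$ terms into the coefficient $(1-{\bf n}\cdot\omega)$, whereas the paper collects them in $k$ first; the sign bookkeeping you flag is indeed the only delicate point and is handled identically.
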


\begin{proof}
Using the approximation \eqref{ordre1} and the definition of the total field, we can write 
\begin{eqnarray}
{\partial_{\bf n}} w^t (x,k) 
= (-ik(1-{\bf n} \cdot \omega)+\mathcal{C}(x))e^{ikx\cdot \omega}.
\end{eqnarray}
To obtain the kernel of the operator $Q$, we compute the  Fourier transform with respect to the variable $t$. Let $\varphi_x(k)=\varphi(x,k) \in S(\mathbb{R}) $, we have then 
\begin{eqnarray}
\nonumber \langle \widehat{\partial_{\bf n}  w^t}(x,k),\varphi (x,k)\rangle_{S',S} 
=\left\langle \partial_{\bf n} w^t (x,k),\widehat{\varphi} (x,k)\right\rangle_{S',S} \hspace{0.8cm}\\[2pt]
\nonumber =\int_\mathbb{R}\partial_{\bf n} w^t(x,k)\widehat{\varphi} (x,k)dk \hspace{1cm} \\[2pt]
\nonumber=\int_{\mathbb{R}\times \mathbb{R}}\partial_{\bf n} w^t(x,k)\varphi (x,t)e^{-ikt}dk dt \hspace{-0.5cm} \\[2pt]
\nonumber=\int_{\mathbb{R}\times \mathbb{R}}\left[-ik(1-{\bf n} \cdot \omega)e^{ikx \cdot \omega}+\mathcal{C}(x) e^{ikx\cdot \omega}\right] \varphi(x,t)e^{-ikt}dkdt \hspace{-4.3cm}\\[2pt]
        \nonumber =\langle (\partial_t(1-{\bf n} \cdot \omega)+\mathcal{C}(x))\delta(t-\omega \cdot x),\varphi (x,t)  \rangle_{S',S} \hspace{-2.6cm}\\[2pt]
=\left\langle\kappa_Q(x,t),\varphi(x,t)\right\rangle_{S',S},\label{Souad1} \hspace{1.cm}
\end{eqnarray}
where $\langle\widehat{ik f(k)},\varphi\rangle_{S',S} = \langle-\partial_t\widehat{f}(t),\varphi\rangle_{S',S}$ and 
$\langle\widehat{e^{ik\omega \cdot x}},\varphi\rangle_{S',S}=\langle \delta (t-\omega \cdot x),\varphi\rangle_{S',S}$.  Therefore  
\begin{equation}
\kappa_Q(x,t)=((1-{\bf n} \cdot \omega)\partial_t+\mathcal{C}(x))\kappa_F(x,k),
\end{equation}
and $Q=((1-{\bf n} \cdot \omega) \partial_t+\mathcal{C}(x))F$. 
\end{proof}

In the following, we  use the same decomposition of the operator $F$ given in \cite{tay1} (equation (5.9)), that is, 
\begin{equation}
F=J(E_{1}\mathbb{A}_{i}+E_{2}\mathbb{A}_{i}^{\prime })K\label{F},
\end{equation}
with $E_{1}\in \Psi^{-n/2+1/6}$, $E_{2}\in \Psi ^{-n/2-1/6}$, and $J$ and $K$ are elliptic Fourier integral operators of order 0. The operators  $\mathbb{A}_{i}$ and $\mathbb{A}_{i}^{\prime }$ are Fourier 
Airy integral operators and are  defined by 
\begin{equation}
\mathbb{A}_{i}^{(l)}u(x,t)=\int e^{itk+ix\xi}A_i^{(l)}(\xi_1k^{-1/3})\widehat{u}(\xi ,k)d\xi dk, \label{Ai}
\end{equation}
\begin{equation}
\widehat{(\mathbb{A}_{i}u)}(\xi,k )=A_{i}(k^{-1/3}\xi _{1})\widehat{u}(\xi,k), \label{A}
\end{equation}
\begin{equation}
\widehat{(\mathbb{A}_{i}^{\prime }u)}(\xi,k)=A_{i}^{\prime }(k^{-1/3}\xi _{1})\widehat{u}(\xi,k), 
\end{equation}
where $\xi=(\xi_1,\dots, \xi_n) \in \mathbb{R}^{n}$, $k \in \mathbb{R}$, $l$ is an integer indicating the order of the derivative,  $A_{i}$ is the Airy function 
\begin{equation}
A_{i}(s)=\frac{1}{2\pi }\int\limits_{-\infty }^{+\infty}e^{i(\frac{t^{3}}{3}+st)}dt,
\end{equation}
with
\begin{equation}
A_{\pm }(s)=Ai(e^{\pm 2\pi i/3}s)\label{A+}, 
\end{equation}
and $A_{i}^{\prime }$ is its derivative. Finally, we define the operator $\mathbb{A}^{-1}$ \cite{tay1} as follows 
\begin{equation}
\widehat{(\mathbb{A}^{-1}u)}(\xi,k)=\frac{1}{A_+}(k^{-1/3}\xi _{1})\widehat{u}(\xi,k).  \label{invairy}
\end{equation}

The next theorem is concerned with the computation of the amplitude of the operator \eqref{Qorder1}.

\begin{theorem}\label{th4}
Let  $K$ and $J$ be elliptic Fourier  integral operators of order 0. Then the operator $Q$ and its kernel $\kappa_Q$ can be respectively written as
\begin{equation}
Q=(1-{\bf n} \cdot \omega)J\mathbb{A}^{-1}P_1K+\mathcal{C}(x) J \mathbb{A}^{-1}P_2K, \label{operateurQ1}
\end{equation}
\begin{equation}
\kappa_Q(x,t)=\int e^{i\psi_1(x,\xi,k)-ikt}\left((1-
{\bf n} \cdot \omega)a(x,\xi,k)+\mathcal{C}(x)  b(x,\xi,k)\right)\frac{1}{A_+}(k^{-1/3}\xi_1)d\xi dk, 
\end{equation}
such that  $P_1\in \Psi ^{-n/2+5/6}$ with a symbol $p_1$, $P_2\in \Psi^{-n/2-1/6}$ with a symbol $p_2$, 
$a(x,\xi,k)=p_1(x,\xi,k)a_J(x,\xi,k)$, 
$b(x,\xi,k)=p_2(x,\xi,k)a_J(x,\xi,k)$, with $a_J \in S_{(0,1)}^0$, 
and $\mathbb{A}^{-1}$ 
is a pseudo-differential operator defined by  \eqref{invairy} \cite{tay1}. 
In addition, its  amplitude is given by 
\begin{equation}\label{Rayhnahbel}
a_Q(x,k)=\int e^{ik\psi_1(x,\zeta)}\left((1-{\bf n} 
\cdot \omega)a_1(x,\zeta,k)+\mathcal{C}(x) b_1(x,\zeta,k)\right)\frac{1}{A_+}(k^{2/3}\zeta_1)d\zeta,
\end{equation}
 with $\zeta=k\xi$, $a_1(x,\zeta,k)=k^n a(x,k\xi,k )$ and $b_1(x,\zeta,k)=k^n b(x,k\xi,k)$. 
\end{theorem}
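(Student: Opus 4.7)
The plan is to start from the previous theorem's identity $Q = ((1-\mathbf{n}\cdot\omega)\partial_t + \mathcal{C}(x))F$ and insert the Melrose--Taylor decomposition \eqref{F}. I would first reorganize the two Airy operators into a single factor of $\mathbb{A}^{-1}$ times new pseudo-differential operators, then absorb the $\partial_t$ and $\mathcal{C}(x)$ prefactors into the central pseudo-differential block to produce $P_1$ and $P_2$, and finally read off the kernel and the amplitude by unwinding the oscillatory integral representations through the change of variables $\zeta = k\xi$.

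For the first step, since $A_+(s)$ has no real zeros, the ratios $A_i(s)/A_+(s)$ and $A_i'(s)/A_+(s)$ are smooth in $s = k^{-1/3}\xi_1$, and the standard Airy asymptotics used in \cite{tay1} ensure that they obey symbol estimates in an Airy-adapted H\"ormander class. Writing $\mathbb{A}_i = \mathbb{A}^{-1} M_1$ and $\mathbb{A}_i' = \mathbb{A}^{-1} M_2$ for the corresponding Fourier multipliers and commuting $E_1, E_2$ past $\mathbb{A}^{-1}$ up to lower-order errors, I obtain
$$F = J\,\mathbb{A}^{-1}\,(\widetilde{E}_1 M_1 + \widetilde{E}_2 M_2)\,K,$$
where $\widetilde{E}_1 M_1 + \widetilde{E}_2 M_2$ is a single pseudo-differential operator of order $-n/2 - 1/6$.

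For the second step, I would push the prefactor past $J$ and $\mathbb{A}^{-1}$ into this central block. Since $\partial_t$ corresponds to multiplication by $ik$ on the oscillatory-integral side, it raises the pseudo-differential order by one and produces a symbol $p_1$ of order $-n/2 + 5/6$; grouping gives the operator $P_1$. The factor $\mathcal{C}(x)$ is a smooth multiplier and preserves the order, so it gives $P_2$ with symbol $p_2 \in S^{-n/2-1/6}$. This yields the factorization \eqref{operateurQ1}. Representing $J$ as an oscillatory integral with amplitude $a_J \in S^0_{0,1}$ and phase $\psi_1$ (inherited from composing the phase of $J$ with the Fourier phases of $\mathbb{A}^{-1}$ and $P_j$), and recognizing $1/A_+(k^{-1/3}\xi_1)$ as the multiplier supplied by $\mathbb{A}^{-1}$, combining the two terms of \eqref{operateurQ1} yields the displayed kernel with $a = p_1 a_J$ and $b = p_2 a_J$.

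Finally, to obtain \eqref{Rayhnahbel}, I would substitute the incident field into the kernel representation of $Q$, isolate the factor $e^{ikx\cdot\omega}$ that defines $a_Q$, and perform the change of variables $\zeta = k\xi$. This renders the Airy argument $k^{-1/3}\xi_1$ into $k^{2/3}\zeta_1$, the phase becomes $k\psi_1(x,\zeta)$ by homogeneity, and the Jacobian $k^{-n}$ is absorbed into $a_1(x,\zeta,k) = k^n a(x,k\xi,k)$ and similarly for $b_1$. The main obstacle is the first step: one must verify rigorously that $A_i/A_+$ and $A_i'/A_+$ define valid symbols in the Airy-adapted calculus and that $\mathbb{A}^{-1}$ commutes with $E_1, E_2$ up to acceptable remainders, even near $\xi_1 = 0$. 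These are precisely the technical points underlying the Melrose--Taylor calculus, which I would invoke directly rather than redevelop.
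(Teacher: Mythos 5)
Your proposal follows essentially the same route as the paper: insert the Melrose--Taylor decomposition $F=J(E_1\mathbb{A}_i+E_2\mathbb{A}_i')K$, use their Theorem 6.5 to replace the Airy combination by $\mathbb{A}^{-1}$ times a pseudo-differential factor (with $\partial_t$ raising the order by one to give $P_1\in\Psi^{-n/2+5/6}$ and $\mathcal{C}(x)$ preserving it to give $P_2\in\Psi^{-n/2-1/6}$), read off the kernel from the oscillatory-integral form of $J$ composed with these factors and with the multiplier $1/A_+(k^{-1/3}\xi_1)$, and rescale by $\zeta=k\xi$; the paper does exactly this, merely applying the prefactors before rather than after the conversion and extracting the kernel via $Q\delta_0$. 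One small slip in the part you rightly defer to \cite{tay1}: the factorization $\mathbb{A}_i=\mathbb{A}^{-1}M_1$ requires $M_1$ to be the multiplier by $A_+A_i$, not by $A_i/A_+$.
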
 

\begin{proof}
Using (\ref{F}) and equation \eqref{Qorder1} we obtain  
\begin{equation}
Q=(1-{\bf n} \cdot \omega)J(E_{3}\mathbb{A}_{i}+E_{4}\mathbb{A}_{i}^{\prime })K+\mathcal{C}(x) J( E_{1}\mathbb{A}_{i}+E_{2}\mathbb{A}_{i}^{\prime})K, 
\end{equation}
 where $E_{1}\in \Psi ^{-n/2+1/6}$, $E_{2}\in \Psi^{-n/2-1/6}$,
 $E_{3}\in\Psi^{-n/2+1/6+1}$, and $E_{4}\in \Psi^{-n/2-1/6+1}$. The operators  $\mathbb{A}_{i}$ and  $\mathbb{A}_i^{\prime }$ 
are Airy operators given by \eqref{Ai}.  Using Theorem 6.5 in \cite{tay1}, we can write  
\begin{equation}
 Q=(1-{\bf n} \cdot \omega)J\mathbb{A}^{-1}P_1K+\mathcal{C}(x)  J\mathbb{A}^{-1}P_2K, \label{operateurQ2}
 \end{equation}
 with $P_1\in\Psi^{-n/2-1/6+1}$ and $P_2\in\Psi^{-n/2-1/6}$. To compute the  oscillatory integral related to $Q$, consider $\varphi(x,t) \in 
 S(\mathbb{R}^n \times \mathbb{R}) $ and the 
  Dirac delta function 
 $\delta _{(x,t)}\in \mathcal{E} ^{\prime }(B\times\mathbb{R})$  used here to find the kernel of  $Q$ at the base point \cite{tay1,MonoMelrose},  
  we have then 
\begin{eqnarray*}
\langle Q\delta _{0},\varphi (x,t)\rangle &=&\langle(1-{\bf n} \cdot \omega))J\mathbb{A}^{-1}P_1K\delta_{0} +\mathcal{C}(x) J\mathbb{A}^{-1}P_2K\delta_{0},\varphi (x,t)\rangle \nonumber\\&=&\langle(1-{\bf n} \cdot \omega)J\mathbb{A}^{-1}P_{3}\delta _{0},\varphi (x,t)\rangle +\langle  \mathcal{C}(x) J\mathbb{A}^{-1}P_{4}\delta _{0},\varphi (x,t)\rangle \nonumber \\&=&\langle(1-{\bf n} \cdot \omega)JP_3\mathbb{A}^{-1}\delta _{0},\varphi (x,t)\rangle +\langle \mathcal{C}(x) JP_{4}\mathbb{A}^{-1}\delta _{0},\varphi (x,t)\rangle \nonumber \\
&=&\langle Q_{1}\delta_{0},\varphi (x,t)\rangle +\langle Q_{2}\delta_{0,},\varphi (x,t)\rangle, 
\end{eqnarray*}
where
\begin{equation}
  Q_{1}\delta_{0}=(1-{\bf n} \cdot \omega)JP_3\mathbb{A}^{-1}\delta _{0}, \quad 
 Q_{2}\delta_{0}=(1-{\bf n} \cdot \omega)JP_4\mathbb{A}^{-1}\delta _{0},
\end{equation}
and
\begin{equation}
	J u(x,t)=\int e^{i\psi_1(x,\xi,k)-iy\xi-ikt_1} a_J(x,y,t_1\xi,k)u(y,t_1) dy dt_1d\xi dk,
\end{equation}
with  $P_{3}\in \Psi ^{-n/2-1/6+1}$ and $P_{4}\in \Psi ^{-n/2-1/6}$ such that $P_{3}=P_1K$ and  $P_{4}=P_2K$, and knowing that 
 $\mathbb{A}^{-1}$ commute with $P_3$ and $P_4$ \cite{tay1}. Here, $(y,t_1)\in \partial K \times \mathbb{R}$, $(\xi,k)$ indicates  the dual of $(x,t)$, 
and the phase function $\psi_1$ is defined in the three regions of the obstacle  \cite{tay1}. In the illuminated region $\{x \in \partial K, \ {\bf n}(x)\cdot \omega < 0\}$, it is given by 
 \begin{equation}
 \psi_1(x,\xi,k)=-\frac{|\xi'|^2}{2k}-\frac{|x'|^2}{2}k+\frac{3}{2}(-\xi_1 k^{-1/3})^{3/2}, 
\end{equation} 
 while in the shadow region $\{x \in \partial K, \ {\bf n} (x)\cdot \omega >0\}$, we have 
  \begin{equation}
 \psi_1(x,\xi,k)=-\frac{|\xi|'^2}{2k}-\frac{|x'|^2}{2}k-\frac{3}{2}(-\xi_1 k^{-1/3})^{3/2}. 
\end{equation}  
 Finally, on the shadow boundary $\{x\in \partial K,  \ {\bf n}(x)\cdot \omega =0\}$,     the phase function is as follows 
 \begin{equation}
 \psi_1(x,\xi,k)=-\frac{|\xi'|^2}{2k}-\frac{|x'|^2}{2}k
 \end{equation}
 since $\xi_1=0$ 
 \cite{tay1}. Here,   $x'=(x_2,...,x_n)$, $\xi'=(\xi _2,...,\xi _n)$ such that $x \in \partial K ,\xi \in \mathbb{R}^n$, $t,k \in \mathbb{R}$, and $a_J(x,t,\xi,k)\in S^0_{1,0}$.
The pseudo-differential operator $P_3$ is defined by 
\begin{equation}
P_3u(y,t_1)=\int e^{i(y-Y)\eta+i(t_1-T)\tau}p_3(y,t_1,Y,T,\eta,\tau)u(Y,T)dY dT d\eta d\tau, 
\end{equation}
where $(\eta,\tau)\in \mathbb{R}^n\times \mathbb{R}$ is the dual couple of $(y,t_1)$  and $(Y,T)\in \partial K \times \mathbb{R}$.
Our objective is to compute $Q_1\delta_{0}=JP_3\mathbb{A}^{-1}\delta_{0}$. First,  using standard 
   calculations on composition of operators \cite{Hormander, AndreMartinez}, we have 
\begin{eqnarray}
J\circ P_3 u(x,t)
=\int e^{i\psi_1(x,\xi,k) -iY\xi-iTk}p_{_{J\circ P_3}}(x,Y,T,\xi ,k)u(Y,T)dY dT d\xi dk, 
%
\end{eqnarray}
with 
\begin{eqnarray}
p_{_{J\circ P_3}}(x,Y,T,\xi ,k)=a(x,Y,T,\xi,k)\nonumber \hspace{7.3cm}\\=\int e^{i(Y-y)(\xi - \eta)+i(T-t_1)(k-\tau)}a_J(x,y,t_1,\xi,k)p_3(y,t_1,Y,T,\eta,\tau)dy dt_1 d\eta d\tau, \hspace{-0.9cm} 
\end{eqnarray}
thus
\begin{equation}
J\circ P_3 u(x,t)=\int e^{i\psi_1(x,\xi,k)}a(x,\xi,k)\widehat{u}(\xi,k)d\xi dk, \label{J0P}
\end{equation}
where $a(x,\xi,k)=p_3(x,\xi,k)a_J(x,\xi,k)$. To find $Q_1\delta_{0}$ we need to replace $u(x,t)$ by $\mathbb{A}^{-1}\delta_{0}$ in (\ref{J0P}) where 
\begin{equation}
\widehat{\mathbb{A}^{-1}\delta_{(0,t)}}(\xi ,k)=\frac{1}{A_+} (k^{-1/3}\xi_1) e^{-itk}. 
\end{equation}
 We get 
\begin{eqnarray}
 Q_{1}\delta_{0}=(1-{\bf n} \cdot \omega)J\circ P_3 \circ\mathbb{A}^{-1}\delta_{0}\nonumber \hspace{3.2cm}\\=\int e^{i\psi_1(x,\xi,k)}(1-{\bf n} \cdot \omega)a(x,\xi,k)\widehat{\mathbb{A}^{-1}\delta_{0}}(\xi,k)d\xi dk \hspace{-0.3cm} \nonumber \\ 
 =\int e^{i\psi_1(x,\xi,k)-ikt}(1-{\bf n} \cdot\omega)a(x,\xi,k)\frac{1}{A_+} (k^{-1/3}\xi_1) d\xi dk.  \hspace{-0.8cm} \label{qq1}
\end{eqnarray}
A similar approach for $Q_2$ gives  
\begin{equation}\label{qq2}
Q_2\delta _{0}=\int e^{i\psi _{1}(x,\xi,k )-ikt} \mathcal{C}(x) b(x,\xi,k ) \frac{1}{A_{+}} (k^{-1/3}\xi _{1}) d\xi dk. 
\end{equation}
The equation \eqref{operateurQ2} becomes 
\begin{equation}
 Q\delta _{0} =\int e^{i\psi _{1}(x,\xi,k )-ikt} \left[
(1-{\bf n} \cdot\omega)a(x,\xi,k )+ \mathcal{C}(x)  b(x,\xi,k )\right]\frac{1}{A_{+}}(k^{-1/3}\xi_{1}) dk d\xi, 
\label{Q}
\end{equation}
where $b(x,\xi,k)\in S_{1,0}^{-n/2-1/6}$  and 
$a(x,\xi,k)\in S_{1,0}^{-n/2+5/6}$. This shows that the kernel  $\kappa_Q$ is as follows
\begin{equation}
\kappa _{Q}(x,t)=\int e^{i\psi_1(x,\xi,k )-ikt}\left[(1-{\bf n} \cdot \omega)
a(x,\xi,k )\frac{1}{A_{+}}(k^{-1/3}\xi _{1})+\mathcal{C}(x)  b(x,\xi,k )\frac{1}{A_{+}}(k^{-1/3}\xi _{1})\right] d\xi dk. 
\end{equation}

Taking now the inverse Fourier transform of $\kappa_Q$, we obtain 
the following  amplitude  
\begin{equation}
a_{Q}(x,k)=\int e^{i\psi_1(x,\xi,k) } [(1-{\bf n} \cdot \omega)a(x,\xi,k)\frac{1}{A_{+}}(k^{-1/3}\xi _{1})+\mathcal{C}(x) b(x,\xi,k )\frac{1}{A_{+}}(k^{-1/3}\xi _{1})] d\xi. 
\end{equation}

Applying the change of variable  $\zeta=k\xi$ with 
$\zeta\in \mathbb{R}^{n}$, we find   
\begin{equation}
a_{Q}(x,k)=\int e^{ik\psi_2(x,\zeta)}\left[(1-{\bf n} \cdot \omega)a_{1}(x,\zeta,k)\frac{1}{A_{+}}(k^{2/3}\zeta_{1})+\mathcal{C}(x) b_{1}(x,\zeta,k )\frac{1}{A_{+}}(k^{2/3}\zeta_{1})\right] d\zeta,  \label{aQ} 
\end{equation}
such that 
$a_1(x,\zeta,k)=k^na(x,k\zeta,k)$, $b_1(x,\zeta,k)=k^nb(x,k\zeta,k)$  and $\psi_2(x,\zeta)=\psi_1(x,\zeta,1)=-\frac{\xi'^2}{2}-\frac{x'^2}{2}$.
\end{proof}

The remaining part  of the computation of the
  asymptotic expression of $a_Q$ \eqref{aQ} consists of
  applying the stationary phase method. First, we need the next lemma \cite{tay1}.   
  
\begin{lemma} 
The function $\Psi \in S^1 (\mathbb{R})$ defined as follows  
\begin{equation}
\Psi(\tau)=e^{-i\tau^3/3}\int \frac{1}{A_+(s)}e^{-is\tau}ds
\end{equation}
is rapidly decreasing for $\tau\rightarrow -\infty$, where 
 ${A_+(s)=A_i(e^{\frac{2\pi i}{3}}s})$.
\end{lemma}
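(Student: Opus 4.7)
The plan is to combine the asymptotic expansion of the Airy function with a non-stationary phase argument. The key observation is that when the known asymptotics of $1/A_+(s)$ for $s \to -\infty$ are inserted into the integral defining $\Psi$, the combined phase (including the prefactor $e^{-i\tau^3/3}$) has a stationary point only for $\tau > 0$, so the regime $\tau \to -\infty$ is purely non-stationary and yields rapid decay.

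First I would record that $A_+(s) = A_i(e^{2\pi i/3}s)$ inherits opposite behaviors at the two ends of the real line. As $s \to +\infty$, the argument $e^{2\pi i/3}s$ lies in the sector where the Airy function grows, giving $A_+(s) \sim c\, s^{-1/4} e^{\frac{2}{3}s^{3/2}}$, so that $1/A_+(s)$ is super-exponentially small. As $s \to -\infty$, the argument sits on the ray $\arg z = -\pi/3$, where the dominant Airy exponential becomes purely oscillatory, producing
\[
A_+(s) \sim \tfrac{1}{2\sqrt{\pi}}\, e^{i\pi/12}\,|s|^{-1/4}\, e^{\frac{2i}{3}|s|^{3/2}},
\]
together with a full asymptotic expansion in powers of $|s|^{-3/2}$, from which analogous expansions for $1/A_+(s)$ and its derivatives can be read off.

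Next I would split the integral with a smooth cutoff $\chi(s)$ that equals $1$ for $s \le -1$ and vanishes for $s \ge -1/2$. On the complement of the support of $\chi$, the integrand $1/A_+(s)$ is Schwartz (smooth on $[-1,\infty)$ with super-exponential decay at $+\infty$), so its Fourier transform is Schwartz in $\tau$ and remains so after multiplication by the unit-modulus factor $e^{-i\tau^3/3}$. In the region $s \le -1$, I would insert the asymptotic expansion of $1/A_+(s)$ and reduce the analysis, up to rapidly decreasing remainders, to model oscillatory integrals
\[
\int_{-\infty}^{-1/2} a(|s|)\, e^{-i\phi(s,\tau)}\, ds, \qquad \phi(s,\tau) = \tfrac{2}{3}|s|^{3/2} + s\tau + \tfrac{\tau^3}{3},
\]
with $a$ a classical symbol of order $1/4$ plus lower order terms. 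Since $\partial_s \phi = -|s|^{1/2} + \tau$ vanishes only when $\tau > 0$ (at $|s| = \tau^2$), for $\tau < 0$ one has the lower bound $|\partial_s \phi| \ge |s|^{1/2} + |\tau|$, and applying the standard non-stationary phase operator $L = (i\partial_s\phi)^{-1}\partial_s$ repeatedly gains a factor $(1+|\tau|)^{-1}$ at each step, yielding the desired rapid decrease as $\tau \to -\infty$.

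Finally, to obtain the symbol estimate $\Psi \in S^1(\mathbb{R})$ I would differentiate under the integral sign. Each $\partial_\tau$ brings down either a factor $-is$ from the oscillatory kernel or $-i\tau^2$ from the prefactor, and applying stationary phase at the critical point $|s| = \tau^2$ (which is relevant for $\tau \to +\infty$) yields bounds $|\Psi^{(\alpha)}(\tau)| \le C_\alpha\,(1+|\tau|)^{1-\alpha}$, consistent with the leading behavior $\Psi(\tau) \sim -2i\tau$ already noted in the paper. The step I expect to be the main obstacle is the rigorous justification of the oscillatory-integral manipulations: interpreting $\int (1/A_+(s))\, e^{-is\tau}\, ds$ as a tempered-distribution pairing, controlling the removal of an auxiliary regulator, and verifying that the asymptotic expansion of $1/A_+(s)$ on $s \le -1$ can be differentiated termwise with uniform remainder estimates so that it can be safely inserted into the integration-by-parts scheme.
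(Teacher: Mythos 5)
The paper offers no proof of this lemma at all: it is quoted verbatim from Melrose--Taylor \cite{tay1} (``First, we need the next lemma [35]''), so there is no in-paper argument to compare yours against. Your outline is, as far as I can tell, the standard proof of this fact and is sound. The skeleton is right: $1/A_+$ decays super-exponentially as $s\to+\infty$ because $e^{2\pi i/3}s$ then lies in a sector where $A_i$ is dominant; as $s\to-\infty$ the argument sits on the anti-Stokes ray $\arg z=-\pi/3$, so $1/A_+(s)$ is a symbol of order $1/4$ times $e^{\frac{2i}{3}|s|^{3/2}}$; the combined phase $\frac{2}{3}|s|^{3/2}+s\tau+\frac{\tau^3}{3}$ has $\partial_s\phi=-|s|^{1/2}+\tau$, hence no critical point for $\tau<0$ and the lower bound $|s|^{1/2}+|\tau|$ that feeds the non-stationary-phase iteration. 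Two points deserve to be made explicit. First, you should record that $A_+$ has no zeros on the real line (the zeros of $A_i$ all lie on the negative real axis, and $e^{2\pi i/3}s$ meets it only at $s=0$, where $A_i(0)\neq 0$), so that $1/A_+$ is actually smooth and the cutoff decomposition makes sense. Second, your treatment of the $S^1$ estimate is the weakest step as written: ``bringing down $-is$ or $-i\tau^2$'' naively worsens the bounds, and the reason it does not is the exact cancellation $\partial_\tau\bigl(\frac{2}{3}|s|^{3/2}+s\tau+\frac{\tau^3}{3}\bigr)=s+\tau^2$, which vanishes at the critical point $s=-\tau^2$ (indeed the whole phase vanishes there --- this is precisely why the prefactor $e^{-i\tau^3/3}$ is included in the definition of $\Psi$). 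Making that cancellation explicit, together with the termwise-differentiable remainder estimates for the Airy expansion that you already flag, closes the argument.
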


\begin{theorem}
The asymptotic expansion of the amplitude $a_Q$  is given by 
\begin{equation}\label{sum.aQ}
a_Q(x,k) = \sum_{p,l=0}^{P,L} k^{2/3-p-2l/3}\left((1-{\bf n} \cdot \omega)a_{p,l}(\omega,x)+ \mathcal{C}(x) b_{p,l}(\omega,x)\right)\psi^{(l)}(k^{1/3}Z(\omega,x))+R_{P,L}(k), 
\end{equation}

such that 
\begin{equation}
|R_{P,L}(k)|\leq C_{PL} k^{-min(2L/3,P+1/3)}, 
\end{equation}
and where  $p \in \{0,1..,P\}$, $l \in \{0,1..,L\}$, $C_{PL}$ is a constant depending on $L$ and $P$,  $\omega$ is the incidence direction,  and $Z(\omega,x)$ is a continuous real function  that is positive on the illuminated region, negative on the shadow region, and vanishing on the shadow boundary. The functions $a_{p,l}$ and $b_{p,l}$ result from the expansion of the symbols $a_1$ and $b_1$ (see Theorem \ref{th4}) and the application of the stationary phase method.  
\end{theorem}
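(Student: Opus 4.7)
The plan is to apply the stationary phase method to the oscillatory integral representation of $a_Q$ produced in Theorem \ref{th4}, essentially paralleling the argument of Melrose--Taylor \cite{tay1} but carrying both symbols $a_1$ and $b_1$ through the computation. I would split the integral into the two pieces corresponding to $(1-{\bf n}\cdot\omega)a_1$ and $\mathcal{C}(x) b_1$ and treat them identically, since linearity of the stationary phase expansion immediately yields the claimed sum of two types of coefficients $a_{p,l}$ and $b_{p,l}$.

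The key structural input is the factorized form of the phase $\psi_2(x,\zeta) = -|\zeta'|^2/2 - |x'|^2/2$, which decouples the tangential variables $\zeta' = (\zeta_2,\dots,\zeta_n)$ from $\zeta_1$. The first step is to carry out the classical non-degenerate stationary phase expansion in $\zeta'$, with phase $-k|\zeta'|^2/2$ and unique stationary point at $\zeta' = 0$. Expanding the symbols $a_1(x,\zeta,k), b_1(x,\zeta,k)$ in homogeneous pieces in $\zeta$ (which is legitimate because $P_1, P_2 \in \Psi^m$ have classical asymptotic expansions), and then Taylor expanding around $\zeta' = 0$, one obtains a series of the form $\sum_p k^{-p-(n-1)/2}\,[(\partial_{\zeta'}^{2p} a_1)(x,\zeta_1,0,k)]/p!$ modulo standard combinatorial constants and Hessian signature, and similarly for $b_1$. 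Each term is then a one-variable symbol in $\zeta_1$ times $1/A_+(k^{2/3}\zeta_1)$.

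Next, in each such $\zeta_1$ integral I would perform the change of variables $s = k^{2/3}\zeta_1$ to bring the integrand into the form defining the function $\Psi$ of the preceding lemma. The cubic correction $\tfrac{3}{2}(-\xi_1 k^{-1/3})^{3/2}$ already present in the original phase $\psi_1$ in the illuminated and shadow regions is exactly what gets absorbed into the $e^{-i\tau^3/3}$ factor in $\Psi$, while the shadow function $Z(\omega,x)$ emerges as the correct spectral parameter, vanishing to first order on the shadow boundary where $\xi_1 = 0$. Differentiations with respect to $\zeta_1$ arising from Taylor expansion in $\zeta_1$ of the symbol introduce the derivatives $\Psi^{(l)}(k^{1/3}Z(\omega,x))$ and the accompanying $k^{-2l/3}$ factors. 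Gathering all powers of $k$ from the $\zeta'$-stationary phase ($k^{-p}$ after normalization), from the symbol orders (the leading $k^{2/3}$), and from the $\zeta_1$-expansion ($k^{-2l/3}$) yields precisely the exponent $2/3 - p - 2l/3$ and the announced double sum.

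The main obstacle I expect is the rigorous control of the remainder $R_{P,L}(k)$. One has to truncate the $\zeta'$-stationary phase expansion at order $P$ and the symbol/Taylor expansion in $\zeta_1$ at order $L$, and then estimate the remaining oscillatory integral. The $P$-dependent piece is the standard stationary phase remainder, which gives a bound of order $k^{-(P+1/3)}$ once the normalization $k^{2/3}$ is included; the $L$-dependent piece requires the Schwartz decay of $\Psi$ for $\tau \to -\infty$ from the lemma, together with its polynomial growth at $+\infty$, to yield a bound of order $k^{-2L/3}$. Combining these two estimates produces the $\min(2L/3, P+1/3)$ in the statement. Since the Melrose--Taylor analysis handles the single-symbol case, I would invoke their estimates verbatim for each of the two terms and only document the bookkeeping needed to combine them with the extra factors $(1-{\bf n}\cdot\omega)$ and $\mathcal{C}(x)$, which are smooth functions of $x$ alone and therefore do not affect the orders.
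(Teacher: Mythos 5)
Your argument is correct in substance and reaches the stated expansion, but it is organized differently from the paper's proof. The paper does not separate the tangential and normal frequency variables: it first rewrites the Airy factor via its Fourier representation, $\frac{1}{A_+}(k^{2/3}\zeta_1)=k^{1/3}\int e^{ikt\zeta_1+ik t^3/3}\Psi(k^{1/3}t)\,dt$ (equation \eqref{Assam}), substitutes this into \eqref{aQ}, expands the combined symbol $A_1=(1-{\bf n}\cdot\omega)a_1+\mathcal{C}(x)b_1$ in powers of $k$ as in \eqref{A_1}, and then performs a \emph{single} stationary phase computation in the joint variables $(\zeta,t)\in\mathbb{R}^{n+1}$ with phase $f(\zeta,t)=\psi_1(x,\zeta)+t\zeta_1+t^3/3$, citing Melrose--Taylor for the critical points and the remainder; the derivatives $\Psi^{(l)}$ and the weights $k^{-2l/3}$ come out of the $t$-differentiations hitting the factor $\Psi(k^{1/3}t)$ carried in the amplitude. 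You instead do stationary phase only in $\zeta'$ and then evaluate the residual $\zeta_1$ integral directly as the Fourier transform of $1/A_+$ via $s=k^{2/3}\zeta_1$, recovering $e^{i\tau^3/3}\Psi(\tau)$ from the definition of $\Psi$; your mechanism for producing $\Psi^{(l)}$ (powers $\zeta_1^l$ from the symbol expansion pairing with derivatives of the transform, each costing $k^{-2/3}$) is equivalent. What the paper's route buys is the ability to quote the Melrose--Taylor critical-point analysis and remainder bound essentially verbatim; what your route buys is a more transparent account of where $\Psi$, $Z$, and the cubic phase come from. One caution: your final power count is asserted rather than carried out, and the exponents here are delicate (the paper itself records $a_1\in S^{n/2+7/6}$ from $a\in S^{-n/2+5/6}$ and a prefactor $k^{1/3}$ from \eqref{Assam}, and these thirds of powers must all be tracked); before relying on the split $\zeta'$/$\zeta_1$ computation you should verify explicitly that the $(n-1)$-dimensional Gaussian normalization $k^{-(n-1)/2}$ plus the $k^{-2/3}$ from the $\zeta_1$ substitution reproduces $k^{2/3-p-2l/3}$, since a naive tally appears to differ from the paper's joint $(n+1)$-dimensional count by a factor of $k^{1/3}$ that must be accounted for in the symbol orders.
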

\begin{proof} 
First, let us note that
\begin{eqnarray}
\frac{1}{A_+}(k^{2/3}\zeta_1)=\mathcal{F}^{-1} \left ( \widehat{\frac{1}{A_+}(k^{2/3}\zeta_1)} \right ) \nonumber\\\hspace{-2cm}=
\mathcal{F}^{-1}\left(\int e^{-ikt\zeta_1}\frac{1}{A_+}(k^{2/3}\zeta_1)d\zeta_1 \right)\nonumber \hspace{-2cm}\\
=\mathcal{F}^{-1}\left(e^{ik\frac{t^3}{3}}\Psi(k^{1/3}t)\right)\nonumber \hspace{-0.1cm}\\
\label{Assam} =k^{1/3}\int e^{ikt\zeta_1 +ik\frac{t^3}{3}}\Psi(k^{1/3}t)dt. \hspace{-1.2cm}
\end{eqnarray}
Using \eqref{Assam} and \eqref{aQ},   $a_Q$ becomes  
\begin{eqnarray}
a_{Q}(x,k)=k^{1/3}\int e^{ik\psi_2(x,\zeta)+ikt\zeta_1 +ik \frac{t^3}{3}}\left[(1-{\bf n} \cdot \omega)a_{1}(x,\zeta ,k)+\mathcal{C}(x)b_{1}(x,\zeta ,k )\right] \Psi(k^{1/3}t)dt d\zeta, \label{aQ2}
\end{eqnarray}
where $a_1(x,\zeta,k)\in S^{n/2+7/6}$ and $b_1(x,\zeta,k)\in S^{n/2+1/6}$ are given in Theorem \ref{th4}. Let us assume now that  
\begin{equation}
(1-{\bf n} \cdot \omega)a_{1}(x,\zeta,k)+\mathcal{C}(x) b_{1}(x,\zeta,k)=A_1(x,\zeta,k) \in S^{n/2+7/6}. 
\end{equation} 
Using properties of pseudo-differential operators \cite{AndreMartinez}, we can write
\begin{equation}
A_1(x,\zeta,k)=\sum^{P}_{p=0}k^{n/2+7/6-p}A_p(x,\zeta), \label{A_1}
\end{equation}
with $P=n/2+7/6$, and 
\begin{equation}
  A_p(x,\zeta)=(1-{\bf n} \cdot \omega)a_{p}(x,\zeta)+\mathcal{C}(x) b_{p}(x,\zeta). 
\end{equation}
This shows that the integral \eqref{aQ2} can be rewritten as
\begin{equation}
k^{1/3}\sum^{P}_{p=0}k^{n/2+7/6-p} \int e^{ikf(\zeta,t)} A_p(x,\zeta)\Psi(k^{1/3}t)dt d\zeta, \label{aQ2-2} 
\end{equation}
such that 
\begin{equation}
f(\zeta ,t)=\psi_1(x,\zeta )+t\zeta_{1}+\frac{t^3}{3},
\end{equation}
see \cite{tay1} for more details regarding the definition of $f$. 
To get the asymptotic expansion of $a_Q$, it remains to apply the stationary phase method to 
 \begin{equation}
 \int e^{ikf(\zeta,t)} A_p(x,\zeta)\Psi(k^{1/3}t)dt d\zeta. 
\end{equation}
The conditions for this application are satisfied and the 
computation of the critical points is done in \cite{tay1}.  
Using standard calculations regarding the  stationary phase  method \cite{AndreMartinez,ClaudeZuily02}, we obtain 
  \begin{eqnarray}
   \int e^{ikf(\zeta,t)} A_p(x,\zeta)\Psi(k^{1/3}t)dt d\zeta = k^{-1/3}\sum_{l=0}^{L} k^{-2l/3-(n+1)/2} A_{p,l}(\omega,x)\Psi^{(l)}(k^{1/3}Z(\omega,x))+R_{L}(k)\label{A_pl}, 
  \end{eqnarray}
  where $ A_{p,l}(\omega,x)=\partial^{l}_{\zeta_1} A_{p}(x,\zeta)|_{\zeta=0}$, and 
  \begin{equation}
 |R_L(k)|\leq C_L k^{-n/2-2L/3-3/2},
\end{equation}   
 with $C_L$ a constant. 
  Using (\ref{A_pl}) in (\ref{aQ2-2}), we obtain 
\begin{equation}
a_Q(x,k)= \sum_{p,l=0}^{P,L} k^{2/3-p-2l/3}\left((1-{\bf n} \cdot \omega)a_{p,l}(\omega,x)+\mathcal{C}(x) b_{p,l}(\omega,x)\right)\Psi ^{(l)}(k^{1/3}Z(\omega,x)) +R_{P,L}(k),
\end{equation}
 with  
\begin{equation}
|R_{P,L}(k)|\leq C_{PL} k^{-min(2L/3,P+1/3)}, 
\end{equation}
and $l \in \{0,1..,L\}$ and $P$ is a real number. 
\end{proof}

The next theorem establishes a relation between the functions $a_{p,l}$ and  $b_{p,l}$ found in \eqref{sum.aQ}. 

\begin{theorem}\label{thab}
The functions $a_{p,l}$ and  $b_{p,l}$ in \eqref{sum.aQ},
$p,l\geq 0$, 
satisfy the equation    
\begin{equation}
b_{p,l}(\omega,x)=-\frac{a_{p,l}(\omega,x)}{ik}. 
\end{equation} 
\end{theorem}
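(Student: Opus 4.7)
The plan is to trace the factor $-ik$, introduced by the derivative $\partial_t$ in the operator $Q = ((1-{\bf n}\cdot\omega)\partial_t + \mathcal{C}(x))F$, through each step of the derivation leading up to \eqref{sum.aQ}.

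First, I would return to the computation in the proof of Theorem \ref{th4} and isolate the relationship between the symbols $a$ and $b$ at the level of the kernel of $Q$. Both terms in $\kappa_Q$ arise from the common base amplitude associated with $F$: the second term $\mathcal{C}(x) F \delta_0$ contributes exactly the amplitude $b(x,\xi,k)$, while the first term $(1-{\bf n}\cdot\omega)\partial_t F \delta_0$ differs only by the application of $\partial_t$ to the factor $e^{-ikt}$. Since the phase $\psi_1(x,\xi,k)$ is independent of $t$, this $\partial_t$ simply multiplies the integrand by $-ik$, so comparing the two integral representations yields the symbol identity $a(x,\xi,k) = -ik\, b(x,\xi,k)$.

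Next, I would push this identity through the change of variable $\zeta = k\xi$. Since both $a$ and $b$ transform identically, being multiplied by the same factor $k^n$, the relation is preserved: $a_1(x,\zeta,k) = -ik\, b_1(x,\zeta,k)$. I would then expand each symbol into its homogeneous components and apply the stationary phase expansion \eqref{A_pl} separately to the $a_1$-contribution and the $b_1$-contribution. Because every operation involved — the homogeneous expansion in $k$, the differentiation $\partial^l_{\zeta_1}$ at the critical point $\zeta = 0$, and the stationary phase approximation in the $(\zeta,t)$ variables — is linear in the amplitude, the scalar multiplicative factor $-ik$ survives unchanged. Hence at every index $(p,l)$ one obtains $a_{p,l}(\omega,x) = -ik\, b_{p,l}(\omega,x)$, which is the required identity.

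The main obstacle I anticipate is the careful $k$-power bookkeeping. Since $a_1 \in S^{n/2+7/6}$ and $b_1 \in S^{n/2+1/6}$ differ in order by exactly one, the homogeneous components at matching indices $p$ produce contributions that differ by a factor of $k$, and it is precisely the factor $-ik$ coming from $\partial_t$ that reconciles them so that both contributions can be grouped under the common prefactor $k^{2/3-p-2l/3}$ in the expansion \eqref{sum.aQ}. Verifying that this order-matching is consistent with the symbol relation $a_1 = -ik\, b_1$ is the step requiring the most attention; once it is in place, the theorem follows directly from the linearity of the stationary phase method.
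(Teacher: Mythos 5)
Your proposal is correct and follows essentially the same route as the paper: the key step in both is observing that in the integral representation of $F\delta_0$ the only $t$-dependence sits in $e^{-ikt}$, so $\partial_t F$ has symbol $-ik\,b(x,\xi,k)$, i.e. $a=-ik\,b$, and the coefficient relation $a_{p,l}=-ik\,b_{p,l}$ then follows. You are in fact somewhat more careful than the paper, which asserts the passage from the symbol identity to the $(p,l)$-coefficients without comment, whereas you explicitly track it through the change of variables, the homogeneous expansion, and the linearity of the stationary phase method, and you correctly flag the one-order discrepancy between $a_1$ and $b_1$ that makes the $k$-bookkeeping (and the $k$-dependence hidden in the stated relation) the delicate point.
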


\begin{proof}
From  equation \eqref{Qorder1} and some operators used  in Theorem \ref{th4} ($F$, $J$, $P_4$ and $P_3$), we have      
\begin{equation}
Q=((1-{\bf n} \cdot \omega)\partial_tF+\mathcal{C}(x)F, \quad Fu(x,t)=JP_{4}\mathbb{A}^{-1}u(x,t). 
\end{equation} 
Similar calculations to the equation \eqref{qq2} give 
\begin{equation}
Fu(x,t)=\int e^{i\psi_1(x,\xi,k)-ikt-ix\xi}b(x,\xi,k)\frac{1}{A_+} (k^{-1/3}\xi_1) \widehat{u}(\xi,k)d\xi dk,
\end{equation}
with $b(x,\xi,k)=a_J(x,\xi,k)p_4(x,\xi,k)\in S^{-n/2-1/6}$, where 
$a_J(x,\xi,k)$ and $p_4(x,\xi,k)$ are given in Theorem \ref{th4}. 
 Knowing that $\partial_tFu(x,t)=JP_3\mathbb{A}^{-1}u(x,t)$ and 
\begin{eqnarray}
\partial_tFu(x,t)=\partial_t\int e^{i\psi_1(x,\xi,k)-ikt-ix\xi}b(x,\xi,k)\frac{1}{A_+} (k^{-1/3}\xi_1) \widehat{u}(\xi,k)d\xi dk,\nonumber\hspace{0.5cm} \\=\int e^{i\psi_1(x,\xi,k)-ikt-ix\xi}(-ik)b(x,\xi,k)\frac{1}{A_+} (k^{-1/3}\xi_1)\widehat{u}(\xi,k)d\xi dk,
\end{eqnarray}
implies that $a(x,\xi,k)=-ikb(x,\xi,k)$. 
%
We know that $b(x,\xi,k)\in S^{-n/2-1/6}$, and then usual pseudo-differential calculus results in $a(x,\xi,k)\in S^{-n/2+5/6}$.
This allows us to conclude that $a_{p,l}(\omega,x)=-ik b_{p,l}(\omega,x)$. 
\end{proof}

\subsection{Some estimates  of the asymptotic expansion \eqref{sum.aQ}}

Two estimates are established in this subsection. For the completion 
of the paper, we recall the next lemma \cite{tay1}.

\begin{lemma}
Let $\Psi \in S^1(\mathbb{R})$,  $\Psi(\tau)$ 
is rapidly decreasing for 
 $\tau\rightarrow -\infty$
and 
\begin{equation}
\Psi(\tau) \sim \sum_{j=0}^\infty c_j\tau^{1-3j} \quad \mbox{for}\ \tau  \rightarrow +\infty. 
\end{equation}
\end{lemma}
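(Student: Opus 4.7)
The statement is quoted from Melrose--Taylor, so the natural approach is to rederive it from the defining integral
\[
\Psi(\tau)=e^{-i\tau^{3}/3}\int \frac{e^{-is\tau}}{A_{+}(s)}\,ds,
\]
using what is known about the Airy function $A_{+}(s)=A_{i}(e^{2\pi i/3}s)$. My plan has three parts: (i) check that the integral makes sense and defines a tempered distribution of the claimed symbol class, (ii) prove rapid decay at $-\infty$ by contour deformation, and (iii) extract the expansion at $+\infty$ by stationary phase.

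For (i), I would recall that $A_{i}$ has zeros only on the negative real axis, so $A_{+}(s)$ is zero-free for real $s$, and use the classical large-argument asymptotics $A_{+}(s)\sim c\,s^{-1/4}e^{\frac{2}{3}is^{3/2}}$ (with appropriate branches) as $|s|\to\infty$ in the relevant sector, so that $1/A_{+}$ grows only polynomially on $\mathbb{R}$ modulo an oscillatory factor. This controls absolute convergence after one integration by parts and also gives the $S^{1}$ symbol bound once combined with the estimates obtained in (iii).

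For (ii), since $A_{+}$ is holomorphic and nonvanishing in the sector $\{\arg s\in(-2\pi/3,\pi/3)\}$ and the asymptotic factor $e^{\frac{2}{3}is^{3/2}}$ decays when $\mathrm{Im}(s^{3/2})>0$, I would rotate the contour into the upper half-plane (a sector of the form $\arg s=\varepsilon>0$). For $\tau<0$ the factor $e^{-is\tau}$ gains exponential decay $e^{\tau\,\mathrm{Im}\,s}$ along such rays, and together with the Airy asymptotics this beats any polynomial; iterating the argument yields rapid (Schwartz) decay as $\tau\to-\infty$.

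For (iii), the main work, I would combine the factor $e^{-i\tau^{3}/3}$ with the leading asymptotic of $1/A_{+}$ to rewrite the integral in a neighborhood of large $s>0$ as an oscillatory integral with phase $-\tau^{3}/3-s\tau-\tfrac{2}{3}s^{3/2}$ and smooth amplitude. Scaling $s=\tau^{2}u$ brings out the large parameter $\tau^{3}$ and the phase becomes $\tau^{3}\bigl(-1/3-u-\tfrac{2}{3}u^{3/2}\bigr)$, whose stationary point $u=1$ is nondegenerate. Applying the stationary phase formula with large parameter $\tau^{3}$ produces an asymptotic series in powers of $\tau^{-3}$, and accounting for the Jacobian $\tau^{2}\,du$ and the $\tau^{1/4}$-type amplitude yields exactly a series of the form $\sum c_{j}\tau^{1-3j}$. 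Contributions from other parts of the contour are absorbed using (ii). The delicate step I expect to be the hardest is bookkeeping the branches in $A_{+}$ together with the contour moves so that the stationary-phase expansion is in fact in integer powers of $\tau^{-3}$ rather than $\tau^{-3/2}$, but this is forced by the cubic homogeneity of the phase $-u-\tfrac{2}{3}u^{3/2}$ once the substitution is made correctly.
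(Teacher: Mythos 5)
First, note that the paper itself does not prove this lemma: it is recalled verbatim from Melrose--Taylor \cite{tay1} ``for the completion of the paper,'' so there is no internal proof to compare against, and your attempt to rederive it from the defining integral $\Psi(\tau)=e^{-i\tau^3/3}\int A_+(s)^{-1}e^{-is\tau}\,ds$ is the right instinct and follows the same broad strategy as the original source (control of $1/A_+$ on the real line, non-stationary phase for $\tau\to-\infty$, stationary phase with large parameter $\tau^3$ after the scaling $s\sim\tau^2$ for $\tau\to+\infty$).

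However, as written your part (iii) contains a concrete error that kills the computation. With $A_+(s)=A_i(e^{2\pi i/3}s)$ and the standard asymptotics $A_i(z)\sim \tfrac{1}{2\sqrt{\pi}}z^{-1/4}e^{-\frac{2}{3}z^{3/2}}$ for $|\arg z|<\pi$, one finds that for $s\to+\infty$ the argument $e^{2\pi i/3}s$ satisfies $z^{3/2}=-s^{3/2}$, so $A_+(s)$ grows like $e^{\frac{2}{3}s^{3/2}}$ and $1/A_+$ is \emph{exponentially small} there: the half-line $s>0$ contributes nothing to the leading asymptotics. The oscillatory region is $s\to-\infty$, where $z=e^{-i\pi/3}|s|$, $z^{3/2}=-i|s|^{3/2}$, and $1/A_+(s)\sim C|s|^{1/4}e^{-\frac{2i}{3}|s|^{3/2}}$. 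Writing $s=-\sigma$ the total phase is $\sigma\tau-\tfrac{2}{3}\sigma^{3/2}$, whose derivative $\tau-\sigma^{1/2}$ vanishes at $\sigma=\tau^2$ precisely when $\tau>0$, the critical value $\tfrac13\tau^3$ cancels the prefactor $e^{-i\tau^3/3}$, and the scaling $\sigma=\tau^2u$ gives the nondegenerate point $u=1$ and the series in $\tau^{1-3j}$. By contrast, the phase you wrote, $\tau^3(-1/3-u-\tfrac23 u^{3/2})$, has derivative $-1-u^{1/2}$, which never vanishes for $u>0$; there is no stationary point at $u=1$ with your signs, so the expansion cannot be extracted from the region you chose. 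The same sign bookkeeping governs part (ii): the zeros of $A_+$ lie on the ray $\arg s=\pi/3$, so the admissible contour deformation and the sector in which $e^{-is\tau}$ gains decay for $\tau<0$ must be chosen consistently with the corrected location of the oscillatory branch (alternatively, for $\tau<0$ the non-stationary phase lemma applies directly on the real line since $|\tau+\sigma^{1/2}|$ is bounded below). The skeleton of your argument is salvageable, but the step you flagged as ``delicate bookkeeping'' is exactly where the proof currently fails.
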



The next result compares 
the asymptotic expansion \eqref{sum.aQ} with $\partial_{\bf n} w^t$. 

\begin{proposition}
If $a_Q$ is the amplitude given by (\ref{sum.aQ}), then
\begin{equation}
|\eta (x,k)-{\partial_{\bf n}} w^t (x,k) |\leq  C k^{-1} {\text{         for           }} k \rightarrow + \infty, 
\end{equation}
where $C$ is a real constant and 
\begin{equation}\label{eta} 
\eta(x,k) = 
\sum_{p,l=0}^{P,L} k^{2/3-p-2l/3}\left((1-{\bf n} \cdot \omega)a_{p,l}(\omega,x)+ \mathcal{C}(x) b_{p,l}(\omega,x)\right)\psi^{(l)}(k^{1/3}Z(\omega,x))e^{ik x \cdot \omega}. 
\end{equation}
\end{proposition}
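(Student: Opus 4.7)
The plan is to recognize that this proposition is essentially a direct consequence of the previous theorem on the asymptotic expansion of $a_Q(x,k)$, once we recall that $\partial_{\mathbf n} w^t(x,k) = a_Q(x,k)\, e^{ikx\cdot\omega}$ by the definition introduced just before equation \eqref{OQ}.

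First, I would factor out the oscillatory exponential and write
\begin{equation}
\eta(x,k) - \partial_{\mathbf n} w^t(x,k) = \bigl(a_Q^{\mathrm{trunc}}(x,k) - a_Q(x,k)\bigr) e^{ikx\cdot\omega},
\end{equation}
where $a_Q^{\mathrm{trunc}}$ denotes the finite sum appearing in \eqref{sum.aQ}, i.e., $a_Q - a_Q^{\mathrm{trunc}} = R_{P,L}(k)$. Since $|e^{ikx\cdot\omega}|=1$, taking absolute values gives $|\eta(x,k)-\partial_{\mathbf n} w^t(x,k)| = |R_{P,L}(k)|$, and the previous theorem already furnishes the bound $|R_{P,L}(k)|\leq C_{PL}\, k^{-\min(2L/3,\,P+1/3)}$.

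Next I would choose the truncation indices so that the exponent $\min(2L/3, P+1/3) \geq 1$: for instance $L=2$ (so that $2L/3 = 4/3 \geq 1$) and $P=1$ (so that $P+1/3 = 4/3 \geq 1$). With these choices the remainder bound specializes to $|R_{P,L}(k)|\leq C k^{-4/3}\leq C k^{-1}$ for $k\geq 1$, which yields the claimed estimate with $C = C_{P,L}$.

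There is no genuine obstacle here, since the heavy lifting (the stationary-phase expansion and the $R_{P,L}$ bound) was already carried out in the preceding theorem; the only subtlety is verifying that the exponents $2L/3$ and $P+1/3$ can be simultaneously made at least $1$, which is immediate since the expansion \eqref{sum.aQ} is valid for arbitrary nonnegative integers $P$ and $L$.
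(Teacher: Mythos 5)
Your argument is correct at the level of rigor of the paper, but it takes a genuinely different route from the paper's own proof, and it only establishes the estimate for sufficiently high truncation orders. You attribute the whole difference to the truncation error: since $\partial_{\bf n} w^t = a_Q e^{ikx\cdot\omega}$ and \eqref{sum.aQ} writes $a_Q$ as the finite sum plus $R_{P,L}$, you get $|\eta-\partial_{\bf n} w^t| = |R_{P,L}(k)| \leq C_{PL}\,k^{-\min(2L/3,\,P+1/3)}$ and then pick $L\geq 2$, $P\geq 1$. The paper instead works entirely inside the finite sum: it recalls that under the first-order condition \eqref{ordre1} the amplitude is exactly $-ik(1-{\bf n}\cdot\omega)+\mathcal{C}(x)$, uses Theorem \ref{thab} to replace $b_{p,l}$ by $-a_{p,l}/(ik)$, and then verifies that the $(p,l)=(0,0)$ term of $\eta$ --- with $a_{00}=1/Z$ and $\Psi(\tau)\sim -i\tau$ --- reproduces $\bigl(-ik(1-{\bf n}\cdot\omega)+\mathcal{C}(x)\bigr)e^{ikx\cdot\omega}$, so that the difference is the sum over $p,l\geq 1$, each term of which carries at least $k^{-1}$ after the bound $|\Psi^{(\alpha+1)}(\tau)|\leq M_\alpha\tau^{-\alpha}$. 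The paper's route buys the substantive content of the proposition, namely the consistency check that the leading coefficient of the new expansion matches the first-order DtN amplitude (the analogue of the Kirchhoff check following \eqref{exporg}); your route buys brevity but treats the previous theorem's identity and remainder bound as a black box, so it never touches Theorem \ref{thab} or the value $a_{00}=1/Z$. Two caveats on your version: (i) $\eta$ in the statement is the truncation at the same $P,L$ appearing in \eqref{sum.aQ}, so your bound degenerates to $O(1)$ or $O(k^{-1/3})$ when $L\in\{0,1\}$, whereas the paper's decomposition is intended to give $k^{-1}$ for any truncation; (ii) your step $\partial_{\bf n} w^t = a_Q e^{ikx\cdot\omega}$ with $a_Q$ as in \eqref{sum.aQ} silently assumes that the amplitude produced by the Airy-operator machinery coincides exactly (not merely modulo smoothing errors) with $-ik(1-{\bf n}\cdot\omega)+\mathcal{C}(x)$; the paper avoids leaning on this by comparing against the explicit closed form directly.
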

\begin{proof}
We know that 
\begin{equation}
{\partial_{\bf n}} w^t (x,k)= -ik(1-{\bf n} \cdot \omega) e^{ikx\omega}+\mathcal{C}(x)e^{ikx \cdot \omega}. 
\end{equation}
Using Theorem \ref{thab},  the expansion \eqref{eta} can be written as  
\begin{eqnarray}
\eta(x,k) \sim k^{2/3}\left((1-{\bf n} \cdot \omega)-\frac{\mathcal{C}(x)}{ik}\right)a_{00} (\omega,x)\Psi(k^{1/3}Z(\omega,x))e^{ikx \cdot \omega}\nonumber\hspace{2.5 cm} \\+\sum_{p,l=1}^{P,L} k^{2/3-p-2l/3}\left((1-{\bf n} \cdot \omega)-\frac{\mathcal{C}(x)}{ik}\right)a_{pl}(\omega ,x)\Psi ^{(l)}(k^{1/3}Z(\omega,x))e^{ikx \cdot \omega} \hspace{0.5 cm}\nonumber\\ = k^{2/3}\left((1-{\bf n} \cdot \omega)-\frac{\mathcal{C}(x)}{ik}\right)a_{00}(\omega,x)\Psi(k^{1/3}Z(\omega,x))e^{ikx \cdot \omega} \nonumber \hspace{2.5 cm }\\ +\sum_{\beta, \alpha =0}^{P-1,L-1} k^{-1-\beta -2\alpha/3}\left((1-{\bf n} \cdot \omega)-\frac{\mathcal{C}(x)}{ik}\right)a_{\beta +1,\alpha +1}(\omega,x)\Psi ^{(\alpha +1)}(k^{1/3}Z(\omega,x))e^{ikx \cdot \omega}.\hspace{-1 cm}
\end{eqnarray}
From  the preceding lemma, we have $\Psi (k^{1/3}Z(x,\omega))\sim -ik^{1/3} Z(x,\omega)$ for  $k \rightarrow + \infty$
and taking $a_{00}(x,\omega)=\frac{1}{Z(x,\omega)}$ \cite{tay1}, we obtain  
\begin{eqnarray}
\eta(x,k) \sim \left(-ik(1-{\bf n} \cdot \omega)+\mathcal{C}(x)\right)e^{ikx \cdot \omega}\nonumber \hspace{10.cm}\\+\sum_{\beta, \alpha =0}^{P-1,L-1} k^{-1-\beta -2\alpha/3}\left((1-{\bf n} \cdot \omega)-\frac{\mathcal{C}(x)}{ik}\right)a_{\beta +1,\alpha +1}(\omega,x)\Psi ^{(\alpha +1)}(k^{1/3}Z(\omega,x))e^{ikx \cdot \omega}. \hspace{1.6 cm}
\end{eqnarray}
Knowing that   $ |\partial^{\alpha +1}_{\tau} \Psi (\tau)|\leq M_{\alpha} \tau ^{-\alpha}$ for each $\tau \in \mathbb{R}$ and $|1-{\bf n} \cdot \omega|\leq 2$, we get 
\begin{eqnarray}
\left|\eta(x,k)-{\partial_{\bf n} w^t(x,k)} \right| \leq \sum_{\beta, \alpha =0}^{P-1,L-1} \left| k^{-1-\beta -2\alpha/3}\left((1-{\bf n} \cdot \omega)-\frac{\mathcal{C}(x)}{ik}\right)a_{\beta +1,\alpha +1}(\omega,x)\Psi ^{(\alpha +1)}(k^{1/3}Z(\omega,x))\right| \nonumber \hspace{-0.5cm} \\ \leq \sum_{\beta, \alpha =0}^{P-1,L-1}M  k^{-1-\beta -\alpha} \left|2+ \frac{\textsf{max}_{x\in B} \mathcal{C}(x)}{k}\right|\nonumber \hspace{5.5cm}\\ \leq C k^{-1}, \hspace{10.5cm}
\end{eqnarray}
and $C$ is a real constant.
\end{proof}

The following result estimates  \eqref{sum.aQ} near the shadow boundary. 
\begin{proposition}
If $a_Q$ is the amplitude given by \eqref{sum.aQ}, then
\begin{equation}\label{es2}
|a_Q(x,k)|\leq Mk^{2/3}  \ \  \mbox{ for}\  k \rightarrow + \infty
\end{equation}
where $M$  is a real constant.
\end{proposition}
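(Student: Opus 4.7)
The proof should be a straightforward size estimate that unpacks the finite sum in \eqref{sum.aQ}, bounds each ingredient on the shadow-boundary layer, and verifies that the $(p,l)=(0,0)$ term sets the overall size $k^{2/3}$ while every other term and the remainder contribute strictly lower powers of $k$.

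First I would dispose of the remainder using the estimate already obtained in the preceding theorem, namely
\begin{equation*}
|R_{P,L}(k)|\le C_{PL}\,k^{-\min(2L/3,\,P+1/3)}.
\end{equation*}
Choosing $P$ and $L$ large enough (for instance $L\ge 2$ and $P\ge 1$) makes the exponent negative, so $R_{P,L}(k)=o(k^{2/3})$ as $k\to +\infty$ and can be absorbed into the constant $M$.

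Next, for the finite sum I would bound the coefficients separately from the $\Psi^{(l)}$ factor. Since $B=\partial K$ is a compact $C^{\infty}$ hypersurface, the curvature $\mathcal{C}(x)$ is uniformly bounded on $B$; the factor $|1-{\bf n}\cdot\omega|\le 2$; and the smooth amplitudes $a_{p,l}(\omega,x)$, $b_{p,l}(\omega,x)$ produced by the stationary phase step of Theorem \ref{th4} are uniformly bounded in $x$ for each fixed $(p,l)$. The essential input is the lemma recalled above, which gives $\Psi\in S^1(\mathbb{R})$ and hence
\begin{equation*}
|\Psi^{(l)}(\tau)|\le M_l\,(1+|\tau|)^{1-l},\qquad l\ge 0.
\end{equation*}
In a neighbourhood of the shadow boundary the argument $\tau=k^{1/3}Z(\omega,x)$ stays in a bounded window (this is the standard boundary-layer scaling, in which $Z(\omega,x)=O(k^{-1/3})$), so $|\Psi^{(l)}(k^{1/3}Z(\omega,x))|\le M_l'$ uniformly in $x$ and $k$.

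Combining these estimates term by term, the $(p,l)$-summand in \eqref{sum.aQ} is bounded by a constant times $k^{2/3-p-2l/3}$. The only index pair producing the power $k^{2/3}$ is $(p,l)=(0,0)$; every other pair contributes either $k^{2/3-p}\le k^{-1/3}$ (when $p\ge 1$, $l=0$) or $k^{2/3-2l/3}\le k^0$ (when $l\ge 1$), and all such exponents are strictly smaller than $2/3$. Summing the finitely many terms and adding the controlled remainder yields the desired estimate $|a_Q(x,k)|\le Mk^{2/3}$ for $k$ sufficiently large.

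The main subtlety I would treat carefully is the interpretation of ``near the shadow boundary''. One must restrict to a region where $k^{1/3}|Z(\omega,x)|$ remains bounded, because $\Psi$ grows linearly as $\tau\to +\infty$ on the illuminated side, and without such a restriction the $(0,0)$-term alone would grow like $k$ rather than $k^{2/3}$. Once the boundary-layer scaling is made precise, the uniform bound on the $\Psi^{(l)}$ through $S^1(\mathbb{R})$ makes the estimate routine.
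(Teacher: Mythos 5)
Your proof is correct and follows essentially the same route as the paper's: a term-by-term bound of the finite sum in \eqref{sum.aQ} that isolates the $(p,l)=(0,0)$ contribution as the source of the $k^{2/3}$ growth and treats all other terms and the remainder as lower order. If anything, you are more careful than the paper on the one delicate point: where the paper simply asserts that ``$\Psi$ and all its derivatives are bounded'' (which fails globally, since $\Psi(\tau)\sim c_0\tau$ as $\tau\to+\infty$), you correctly observe that the estimate requires restricting to the shadow-boundary layer where $k^{1/3}Z(\omega,x)$ stays bounded; you also bound the curvature by compactness of $B$ rather than assuming it constant as the paper does.
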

\begin{proof}
  From the definition of $a_Q(x,k)$, it follows that 
\begin{equation}
|a_Q(x,k)| \leq \sum_{p, l =0}^{P,L} k^{2/3-p -2l/3}\left|\left((1-{\bf n} \cdot \omega)-\frac{\mathcal{C}(x)}{ik}\right)\right| \left| a_{p,l}(\omega,x)\Psi ^{(l)}(k^{1/3}Z(\omega,x))\right|,\hspace{-1 cm}
\end{equation}
with $p\in \{0,1,...,P\}$ and  $l\in \{0,1,...,L\}$. 
We know that in the shadow boundary $|1-{\bf n} \cdot \omega|\leq 1$. We assume that the curvature is constant, so $\mathcal{C}(x)=C$,   then 
\begin{eqnarray}
|a_Q(x,k)| \leq  (k^{2/3}+Ck^{-1/3})\left|a_{0,0}(\omega,x)\Psi(k^{1/3}Z(\omega,x))\right| \nonumber \hspace{2.5cm}\\ +\sum_{\alpha, \gamma =0}^{P-1,L-1} k^{-1-\alpha - 2\gamma/3}(1+ Ck^{-1}) \left| a_{\alpha +1,\gamma +1}(\omega,x)\Psi ^{(\gamma +1)}(k^{1/3}Z(\omega,x))\right|. \hspace{-2 cm}
\end{eqnarray}
The function  $\Psi$ and all its derivatives are bounded,   
\begin{equation}
|a_{0,0}(\omega,x)\Psi(k^{1/3}Z(\omega,x))|\leq M_1, 
\end{equation}
 for all $\gamma \in N$,   
 where $M_1$ is a real constant.
  Thus  
\begin{eqnarray}
|a_Q(x,k)|\leq M k^{2/3}+\sum_{\alpha,\gamma =0}^{P-1,L-1} k^{-1-\alpha -2\gamma/3}(1+Ck^{-1})|a_{\alpha +1,\gamma +1}(\omega,x)|M_{\gamma},\hspace{2.4cm}
\end{eqnarray}
such that $M=(1+C)M_1$  and $|\Psi^{(\gamma+1)}(\tau)(k^{1/3}Z(\omega,x))|\leq M_{\gamma}$. Taking  $k\rightarrow + \infty$, we obtain \eqref{es2}
\end{proof}

\section{Expansion of  the amplitude  using the second order approximation}

We are now interested in computing the asymptotic expansion in the case where we have 
\begin{eqnarray}
{\partial_{\bf n} w^s(x,k)} = -ikw^i(x,k)+\frac{c(x)}{2}w^i(x,k)-\frac{c(x)^{2}}{8(c(x)-ik)}w^i(x,k)-\frac{1}{2(c(x)-ik)}\partial^{2}_{x}w^i(x,k) \nonumber\\[4pt]
= -ikw^i(x,k)+\frac{c(x)}{2}w^i(x,k)-\frac{1}{2(c(x)^{2}+k^{2})}(c(x)+ik)(\frac{c(x)^{2}}{4}+ \partial^{2}_{x})w^i(x,k). \label{appr02}
\end{eqnarray}

As for the first order case, we compute first the kernel associated to the operator $Q$ \eqref{OQ} in the case where the DtN is approximated by \eqref{appr02}. For the sake of simplicity, we denote  the operator \eqref{Qorder1} by $Q_1$. 

\begin{theorem}
Let  $K\subset\mathbb{R}^{n+1}$ be a strictly convex bounded  obstacle such that $\partial K=B$, where 
$B$ is a  $C^{\infty }$ hypersurface in  $\mathbb{R}^{n+1}$. Suppose that $ \Omega $ is an open set of  $\mathbb{R}^{n+1}$ such that $\Omega =\mathbb{R}^{n+1}/K$. Let $w^s$ be a solution of (\ref{problem}). Using the approximation (\ref{appr02}), the operator $Q$ can be written as
\begin{eqnarray}
Q=Q_{1}- \frac{\pi}{2c(x)}T_{e^{-c(x)|t|}}(c(x)-\partial_t)\tilde{F} \label{Qapp2},
\end{eqnarray}
where $\kappa_Q(x,t)$ is its kernel given by 
\begin{eqnarray}
\kappa_Q(x,t)=((1-{\bf n} \cdot \omega)+\frac{c(x)}{2})\kappa_F(x,t)-\frac{\pi}{2c(x)}  e^{-c(x)|t|}\ast \left(c(x)-\partial_t\right)\kappa_{\tilde{F}}(x,t),
\end{eqnarray}
and $T_{e^{-c(x)|t|}}$ denotes the convolution operator of $e^{-c(x)|t|}$,
  $\displaystyle \tilde{F}=(\frac{c^2(x)}{4}+\partial^{2}_{x})F$, and 
 $\displaystyle \kappa_{\tilde{F}}(x,t)=(\frac{c^2(x)}{4}+\partial^{2}_{x})\kappa_F(x,t)$ where $\kappa_F(x,t)=\delta(t-\omega \cdot x)$. 
\end{theorem}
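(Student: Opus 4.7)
The plan is to parallel the argument used in the first-order case (Theorem on the operator $Q_1$): starting from the explicit expression of $\partial_{\bf n} w^t$ obtained via the Bayliss–Turkel approximation \eqref{appr02}, I will take the Fourier transform in $t$, identify the pieces already produced by $Q_1$, and then carefully invert the remaining factor $(c(x)+ik)/(c(x)^2+k^2)$ to express it as a convolution in $t$.

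First, I would combine $\partial_{\bf n} w^t = \partial_{\bf n} w^s+\partial_{\bf n} w^i$ using \eqref{appr02} together with $\partial_{\bf n} w^i = ik({\bf n}\cdot\omega)e^{ikx\cdot\omega}$. Factoring $(c(x)+ik)/(2(c(x)^2+k^2))$ out of the last two terms gives
\[
\partial_{\bf n} w^t(x,k)=\left[-ik(1-{\bf n}\cdot\omega)+\tfrac{c(x)}{2}\right]e^{ikx\cdot\omega}-\frac{c(x)+ik}{2(c(x)^2+k^2)}\Bigl(\tfrac{c(x)^2}{4}+\partial_x^2\Bigr)e^{ikx\cdot\omega}.
\]
The first bracket is exactly what produced $Q_1$ in the preceding theorem, so testing against $\varphi\in S(\mathbb{R})$ and repeating the Fourier computation performed there identifies its contribution to the kernel of $Q$ as $((1-{\bf n}\cdot\omega)\partial_t+\mathcal{C}(x))\kappa_F(x,t)$, i.e.\ precisely $\kappa_{Q_1}(x,t)$.

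For the new term, the key identity I would use is the classical inverse Fourier transform
\[
\mathcal{F}^{-1}\!\left[\frac{1}{c(x)^2+k^2}\right](t)=\frac{\pi}{c(x)}\,e^{-c(x)|t|},
\]
valid because $c(x)>0$ for a strictly convex obstacle. Multiplication by $ik$ on the Fourier side corresponds to $-\partial_t$ on the physical side (exactly the identity already invoked in the first-order proof), so multiplication by $(c(x)+ik)$ becomes the operator $(c(x)-\partial_t)$. Moreover, multiplication on the Fourier side turns into convolution in $t$, while the spatial derivatives $\partial_x^2$ and the factor $c(x)^2/4$ commute with the $t$-Fourier transform. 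Therefore, pairing the second piece of $\partial_{\bf n} w^t$ against $\widehat{\varphi}$ and moving all $k$-operations to $t$-operations yields
\[
-\frac{\pi}{2c(x)}\,e^{-c(x)|t|}*\bigl(c(x)-\partial_t\bigr)\Bigl(\tfrac{c(x)^2}{4}+\partial_x^2\Bigr)\kappa_F(x,t),
\]
which, after recognizing $\kappa_{\tilde F}=(c(x)^2/4+\partial_x^2)\kappa_F$ and the convolution operator $T_{e^{-c(x)|t|}}$, is exactly the second summand in the claimed expression for $\kappa_Q$. Adding the two contributions gives the stated formula, and hence $Q=Q_1-\frac{\pi}{2c(x)}T_{e^{-c(x)|t|}}(c(x)-\partial_t)\tilde F$.

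The main obstacle, as I see it, is the rigorous bookkeeping at the distributional level: the factor $(c(x)^2+k^2)^{-1}$ multiplies a tempered distribution supported in $k$, and one must justify the passage from frequency-side multiplication to time-side convolution when $\kappa_{\tilde F}$ contains derivatives of $\delta(t-\omega\cdot x)$. I would address this by regularizing (e.g.\ testing against $\varphi\in S(\mathbb{R})$, as was done in the first-order proof) and invoking the fact that $e^{-c(x)|t|}\in L^1(\mathbb{R})\cap \mathcal{E}'(\mathbb{R})_{\text{loc}}$, so the convolution of $e^{-c(x)|t|}$ with a compactly supported distribution is well defined and the derivatives $(c(x)-\partial_t)$ and $(c(x)^2/4+\partial_x^2)$ can be moved inside or outside the convolution without picking up boundary terms. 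The remainder of the argument is then a direct transcription of the Fourier/duality computation already carried out for $Q_1$.
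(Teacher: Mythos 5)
Your proposal follows essentially the same route as the paper's proof: decompose $\partial_{\bf n}w^t$ into the first-order part (reusing the $Q_1$ kernel computation) plus the new term, then convert the factor $(c(x)+ik)/(2(c(x)^2+k^2))$ into the convolution operator $-\tfrac{\pi}{2c(x)}e^{-c(x)|t|}\ast(c(x)-\partial_t)$ via the identity $\mathcal{F}^{-1}[1/(c^2+k^2)]\propto e^{-c|t|}$ and the product-to-convolution rule. Your added remarks on the distributional justification go slightly beyond what the paper records, but the argument is the same; note also that you correctly restore the $\partial_t$ in the $\kappa_{Q_1}$ contribution, which is dropped in the theorem statement's displayed kernel.
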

\begin{proof} 
Using the approximation \eqref{appr02} and the definition of the total field, we can write   
\begin{eqnarray}
\partial_{\bf n} w^{t}(x,k) =\left (-ik(1-{\bf n} \cdot \omega)+\frac{c(x)}{2}\right )
 e^{ikx\cdot \omega}-\frac{1}{2(c(x)^{2}+k^{2})}(c(x)+ik)(\frac{c(x)^{2}}{4}+\partial^{2}_{x} )e^{ikx \cdot \omega}\nonumber \hspace{0.2cm}\\=\mathcal{Q}_1 (x,k) +\mathcal{Q}_2 (x,k),  \hspace{8.8cm}
\end{eqnarray}
with 
\begin{equation}
 \mathcal{Q}_1 (x,k) =\left (-ik(1-{\bf n} \cdot \omega)+\frac{c(x)}{2} \right )
 e^{ikx\cdot \omega},
\end{equation} 
\begin{equation}
\mathcal{Q}_2 (x,k) =\left ( -\frac{1}{2(c(x)^{2}+k^{2})}(c(x)+ik)(\frac{c(x)^{2}}{4}+\partial_{x}^{2}) \right ) e^{ikx\cdot \omega}.
\end{equation}
To obtain the kernel of the operator $Q$, we compute  the Fourier transform  of the amplitude $\partial_n w^t(x,k)$ with respect to $k$. Let $\varphi_x(k)=\varphi(x,k)\in S(\mathbb{R})$, thus
\begin{eqnarray}
\left\langle \widehat{\partial_{n} w^t(x,k)},\varphi\right\rangle_{S',S}=\left\langle \widehat{\mathcal{Q}_1 (x,k)},\varphi\right\rangle_{S',S}+\left\langle \widehat{\mathcal{Q}_2 (x,k)},\varphi\right\rangle_{S',S}. \hspace{-0.5cm}
\end{eqnarray}
The quantity $\left\langle \widehat{\mathcal{Q}_1 (x,k)},\varphi\right\rangle_{S',S}$ is given by \eqref{Souad1}. For the one regarding $\mathcal{Q}_2 (x,k)$, we have
\begin{eqnarray}
\nonumber \left\langle \widehat{\mathcal{Q}_2 (x,k)},\varphi\right\rangle_{S',S}=\left\langle \mathcal{Q}_2 (x,k),\widehat{\varphi}\right\rangle_{S',S} \hspace{7.3cm}\\ [2pt] \nonumber
 =-\int_{\mathbb{R}\times\mathbb{R}}\left[\frac{1}{2(c(x)^{2}+k^{2})}(c(x)+ik)(\frac{c(x)^{2}}{4}+\partial_{x}^{2})e^{ikx \cdot \omega}\right] \varphi_x(t)e^{-ikt}dtdk  \hspace{-0.7cm}\\ [2pt] \nonumber =-\int _{\mathbb{R}}\left[\int_{\mathbb{R}}\frac{1}{2(c(x)^{2}+k^{2})}e^{-ikt}dk \ast \int_{\mathbb{R}}(c(x)+ik)(\frac{c(x)^{2}}{4}+\partial_{x}^{2})e^{ikx \cdot 
   \omega}e^{-ikt}dk \right]\varphi_x(t)dt  \hspace{-2.7cm} \\[2pt]
=-\frac{\pi}{2c(x)} \langle e^{-c(x)|t|}\ast \left(c(x)-\partial_t\right)(\frac{c(x)^{2}}{4}+\partial_{x}^{2}) \delta(t-\omega x),\varphi \rangle_{S',S} \nonumber\hspace{0.6cm}\\[2pt] =\left\langle \kappa_{{ Q}_2}(x,t),\varphi\right\rangle_{S',S}.\hspace{6.8cm}
\end{eqnarray}
Taking
\begin{equation}
\kappa_{\tilde{F}}(x,t)=(\frac{c(x)^{2}}{4}+\partial_{x}^{2})\kappa_F(x,t),   \quad \kappa_F(x,t)=\delta(t-\omega \cdot x),
\end{equation}
we get 
\begin{eqnarray}
\kappa_Q(x,t)=((1-{\bf n} \cdot \omega)+\frac{c(x)}{2})\kappa_F(x,t)-\frac{\pi}{2c(x)}  e^{-c(x)|t|}\ast \left(c(x)-\partial_t\right)\kappa_{\tilde{F}}(x,t)\nonumber \\=\kappa_{Q_1}(x,t)+\kappa_{Q_2}(x,t), \hspace{6.7cm}
\end{eqnarray}
where $\kappa_{{ Q}_1}(x,t)$  indicates the kernel of  $Q_1$. 
This allows us to write
\begin{eqnarray}
Q=Q_{1}- \frac{\pi}{2c(x)}T_{e^{-c(x)|t|}}(c(x)-\partial_t)\tilde{F},
\end{eqnarray}
 where $T_{e^{-c(x)|t|}}$ is the convolution operator of $e^{-c(x)|t|}$ and $\tilde{F}=(\frac{c^2(x)}{4}+\partial_{x}^{2})F$. 
\end{proof}

The next theorem is concerned with the computation of the amplitude of the operator \eqref{Qapp2}.

\begin{theorem}
Let  $K$ and $J$ be elliptic Fourier  integral operators of order 0. 
Then the operator $Q$ \eqref{Qapp2} and its kernel $\kappa_Q$ can respectively be written as
\begin{eqnarray}
Q=Q_{1}- \frac{\pi}{2c(x)}T_{e^{-c(x)|t|}}(c(x)J\mathbb{A}^{-1} P^{\#}_1K-J \mathbb{A}^{-1}P^{\#}_2K),
\end{eqnarray}
\begin{equation}
 \kappa_ Q(x,t)=\kappa_{{ Q}_1}(x,t)+ \kappa_ {{ Q}_2}(x,t),
\end{equation}
where $\kappa_{{ Q}_1}(x,t)$ is the kernel of $Q_1$, and $\kappa_{{ Q}_2}$ is defined by 
\begin{eqnarray}
\kappa_{{Q}_2}(x,t)=-\frac{\pi}{2}\int e^{-c(x)|t-r|+i\psi_1(x,\xi,k)-ikr}[b^{\#}(x,\xi,k)-\frac{1}{c(x)} a^{\#}(x,\xi,k)]\frac{1}{A_+}(k^{-1/3}\xi_1) d\xi dk dr. \nonumber \hspace{-2cm} 
\end{eqnarray}
Here $P^{\#}_1\in \Psi^{-n/2-1/6}$, $P^{\#}_2\in \Psi^{-n/2+5/6}$, $a^{\#}(x,\xi,k)=(\frac{c^2(x)}{4}+\partial_{x}^{2})a(x,\xi,k)$, and $b^{\#}(x,\xi,k)=(\frac{c^2(x)}{4}+\partial_{x}^{2})b(x,\xi,k)$ where  $a(x,\xi,k)$ and $b(x,\xi,k)$ are defined in Theorem \ref{th4}. Furthermore, the amplitude 
 $a_Q$ is given by 
\begin{equation}\label{aq22}
 a_ Q(x,k)=a_{Q_1}(x,k)+ a_ {Q_2}(x,k),
\end{equation}
where $a_{Q_1}(x,k)$ is the amplitude \eqref{Rayhnahbel} and 
 \begin{eqnarray}
a_{Q_2}(x,k)=-\frac{1}{2}\frac{c(x)}{c^2(x)+k^2}\int e^{ik\psi_{2}(x,\zeta)}[b^{\#}_1(x,\zeta,k)-\frac{1}{c(x)}a^{\#}_1(x,\zeta,k)]\frac{1}{A_{+}}(k^{2/3}\zeta_{1}) d\zeta, \nonumber  
\end{eqnarray}
such that $a^{\#}_1(x,\zeta,k)=k^na(x,k\zeta,k) \in S^{n/2+7/6}$, $b^{\#}_1(x,\zeta,k)=k^nb(x,k\zeta,k) \in S^{n/2+1/6}$, and  $\psi_2(x,\zeta)$ is given in Theorem \ref{th4}.  
\end{theorem}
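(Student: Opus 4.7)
The plan is to mirror the proof of Theorem \ref{th4}, splitting the two summands in
\[
Q = Q_1 - \frac{\pi}{2c(x)}T_{e^{-c(x)|t|}}(c(x)-\partial_t)\tilde{F}
\]
and treating them separately. The contribution $Q_1$ has already been handled in Theorem \ref{th4} and supplies $\kappa_{Q_1}$ and $a_{Q_1}$ verbatim, so the entire new work lies in analyzing
\[
Q_2 := -\frac{\pi}{2c(x)}T_{e^{-c(x)|t|}}(c(x)-\partial_t)\tilde{F}.
\]
I would attack this in three stages: first convert the operator into Fourier-integral form, then compute its kernel by evaluating on $\delta_0$, and finally Fourier invert in $t$ to extract the amplitude.

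For the first stage, I would insert the decomposition \eqref{F} of $F$ into $\tilde{F}=(c^2(x)/4+\partial_x^2)F$ and use that $\partial_x^2\in\Psi^2$ commutes with $\mathbb{A}^{-1}$ modulo lower order (Theorem 6.5 of \cite{tay1}) to write
\[
\tilde{F} = \left(\tfrac{c^2(x)}{4}+\partial_x^2\right)J\bigl(E_1\mathbb{A}_i+E_2\mathbb{A}_i'\bigr)K = J\mathbb{A}^{-1}\tilde{P}\,K
\]
for some $\tilde{P}\in\Psi^{-n/2-1/6}$. Multiplication by $c(x)$ preserves the order, while $\partial_t$ raises it by one; invoking Theorem 6.5 of \cite{tay1} once more then yields
\[
(c(x)-\partial_t)\tilde{F} = c(x)J\mathbb{A}^{-1}P_1^{\#}K - J\mathbb{A}^{-1}P_2^{\#}K,
\]
with $P_1^{\#}\in\Psi^{-n/2-1/6}$ and $P_2^{\#}\in\Psi^{-n/2+5/6}$, establishing the operator form of $Q$.

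For the second stage, I would apply the two pieces of $(c(x)-\partial_t)\tilde{F}$ to $\delta_0$ exactly as in \eqref{qq1}--\eqref{qq2}: using the identity $\widehat{\mathbb{A}^{-1}\delta_0}(\xi,k)=\frac{1}{A_+}(k^{-1/3}\xi_1)e^{-irk}$ together with the standard composition of $J$ with a pseudo-differential operator produces an oscillatory integral whose amplitude is $a^{\#}(x,\xi,k)$ or $b^{\#}(x,\xi,k)$, defined as the symbolic action of $(c^2(x)/4+\partial_x^2)$ on the amplitudes $a,b$ from Theorem \ref{th4}; differentiation in $t$ pulls down a $-ik$, giving the combination $c(x)b^{\#}-(-ik)b^{\#}$ that after rewriting via the relation $a=-ikb$ of Theorem \ref{thab} becomes $c(x)[b^{\#}-a^{\#}/c(x)]$. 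The convolution prefactor $-\frac{\pi}{2c(x)}T_{e^{-c(x)|t|}}$ then inserts the outer integral $\int e^{-c(x)|t-r|}(\cdot)\,dr$, yielding precisely the announced formula for $\kappa_{Q_2}$. For the third stage, I take the inverse Fourier transform of $\kappa_{Q_2}$ in $t$; since $\mathcal{F}_{t\to k}(e^{-c(x)|t|})=\frac{2c(x)}{c^2(x)+k^2}$, the convolution collapses to multiplication by this factor, and after combining with the prefactor $-\frac{\pi}{2c(x)}$ and the Fourier normalization the outer coefficient becomes $-\frac{1}{2}\frac{c(x)}{c^2(x)+k^2}$. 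The same change of variables $\zeta=k\xi$ used in Theorem \ref{th4} then produces the stated formula for $a_{Q_2}$, and adding $a_{Q_1}$ gives \eqref{aq22}.

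The main obstacle I anticipate is the first stage: verifying that $(c^2(x)/4+\partial_x^2)F$ can genuinely be reduced to the canonical Melrose--Taylor form $J\mathbb{A}^{-1}\tilde{P}K$. One must carefully track how $\partial_x^2$ acts on both the amplitude and the phase $\psi_1$ in the oscillatory integral representation of $F$, and confirm that the corrections arising from differentiation of the phase can be absorbed into a single pseudo-differential factor of the claimed order within $S^m_{1,0}$. Once this reduction is justified, the remaining steps are routine symbolic and Fourier calculus.
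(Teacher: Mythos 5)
Your proposal follows essentially the same route as the paper: decompose $\tilde F$ via \eqref{F}, reduce to the canonical form $J\mathbb{A}^{-1}P^{\#}K$ using Theorem 6.5 of \cite{tay1}, evaluate on $\delta_0$ to read off $\kappa_{Q_2}$ as a convolution against $e^{-c(x)|t|}$, and then Fourier-invert so the convolution becomes multiplication by $\tfrac{1}{\pi}\tfrac{c(x)}{c^2(x)+k^2}$ before the change of variables $\zeta=k\xi$. The "main obstacle" you flag — justifying that $(\tfrac{c^2(x)}{4}+\partial_x^2)F$ really reduces to $J\mathbb{A}^{-1}\tilde P K$ with the corrections from differentiating the phase absorbed into the symbol — is in fact passed over in the paper as well, which simply declares $p^{\#}_j=(\tfrac{c^2(x)}{4}+\partial_x^2)p_j$; so your attempt is, if anything, slightly more careful on that point.
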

\begin{proof}
Using (\ref{F}) and (\ref{Qapp2}) we obtain 
\begin{eqnarray}
Q=Q_{1}- \frac{\pi}{2c(x)}T_{e^{-c(x)|t|}}(c(x)J(\tilde{E}_1 \mathbb{A}_i+ \tilde{E}_2\mathbb{A}_i')K-J(\tilde{E}_3 \mathbb{A}_i+ \tilde{E}_4\mathbb{A}_i')K),
\end{eqnarray}
where $\tilde{E}_1 \in \Psi ^{-n/2+1/6}$,  $\tilde{E}_2 \in \Psi ^{-n/2-1/6}$, $\tilde{E}_3 \in \Psi ^{-n/2+1/6+1}$, and  $\tilde{E}_4 \in \Psi ^{-n/2-1/6+1}$. Using Theorem 6.5 in \cite{tay1}, we get  
\begin{eqnarray}
Q=Q_{1}- \frac{\pi}{2c(x)}T_{e^{-c(x)|t|}}(c(x)J\mathbb{A}^{-1} P^{\#}_1K-J \mathbb{A}^{-1}P^{\#}_2K),
\end{eqnarray}
such that $P^{\#}_1\in \Psi^{-n/2-1/6}$ with symbol 
\begin{equation}
p^{\#}_1(x,\xi,k)=(\frac{c^2(x)}{4}+\partial_{x}^{2})p_2(x,\xi,k), 
\end{equation}
and $P^{\#}_2 \in \Psi^{-n/2+5/6}$
 with symbol 
\begin{equation}
p^{\#}_2(x,\xi,k)=(\frac{c^2(x)}{4}+\partial_{x}^{2})p_1(x,\xi,k),
\end{equation}
where $p_1(x,\xi,k)$ and $p_2(x,\xi,k)$ are described in Theorem \ref{th4}. 
 As in the first order case, we use now the Dirac delta function $\delta(x,t)\in \mathcal{E}^{\prime }(B\times \mathbb{R})$ to find the kernel of $Q$ at the base point. Let $\varphi (x,t)\in S(\mathbb{R}^n \times \mathbb{R}) $, thus we have 
\begin{eqnarray}
\langle Q\delta _{0},\varphi\rangle_{S',S}=\langle Q_1\delta _{0},\varphi\rangle_{S',S} - \langle \frac{\pi}{2c(x)}e^{-c(x)|t|}\ast[(c(x)JP^{\#}_3\mathbb{A}^{-1}-JP^{\#}_4\mathbb{A}^{-1}]\delta _{0},\varphi \rangle_{S',S}\nonumber  \hspace{-2.5cm}\\=\langle Q_1\delta _{0},\varphi\rangle_{S',S} +\langle Q_2\delta _{0},\varphi\rangle_{S',S}, \hspace{3.5cm}\label{Q2int}
\end{eqnarray}
with $P^{\#}_3=P^{\#}_1K$ and $P^{\#}_4=P^{\#}_2K$. The term $\langle Q_1\delta _{0},\varphi\rangle_{S',S}$ is already computed \eqref{Q}. Therefore, we only need  
 \begin{eqnarray}
 \langle Q_2\delta _{0},\varphi\rangle_{S',S}=-\langle \frac{\pi}{2c(x)}e^{-c(x)|t|}\ast A\delta _{0},\varphi\rangle_{S',S}\nonumber \\=-\frac{\pi}{2c(x)}\int e^{-c(x)|t-r|}\langle A\delta _{0} ,\varphi\rangle_{S',S} dr, 
 \hspace{-1cm} 
\end{eqnarray}
with
 \begin{equation}\label{AAAA}
  A=c(x)JP^{\#}_3\mathbb{A}^{-1}-JP^{\#}_4\mathbb{A}^{-1}.
 \end{equation}
On the other hand, we can write 
\begin{eqnarray}
\langle A\delta _{0} ,\varphi\rangle_{S',S}=\int e^{i\psi_1(x,\xi,k)-ikt}[c(x)b^{\#}(x,\xi,k)-a^{\#}(x,\xi,k)]\frac{1}{A_+} (k^{-1/3}\xi_1) d\xi dk, 
\end{eqnarray}
where $b^{\#}(x,\xi,k)=p^{\#}_3(x,\xi,k)a_J(x,\xi,k)$ and $a^{\#}(x,\xi,k)= p^{\#}_4(x,\xi,k)a_J(x,\xi,k)$. 
 This leads to 
 \begin{eqnarray*}
  \langle Q_2\delta _{0},\varphi\rangle_{S',S}=-\frac{\pi}{2c(x)}\int e^{-c(x)|t-r|+i\psi_1(x,\xi,k)-ikr}[c(x)b^{\#}(x,\xi,k)-a^{\#}(x,\xi,k)]\frac{1}{A_+} (k^{-1/3}\xi_1) d\xi dk dr. 
 \end{eqnarray*}
 Finally, we find
\begin{eqnarray}
Q\delta _{0} =Q_1\delta _{0}-\frac{\pi}{2}\int e^{-c(x)|t-r|+i\psi_1(x,\xi,k)-ikr}[b^{\#}(x,\xi,k)-\frac{1}{c(x)} a^{\#}(x,\xi,k)]\frac{1}{A_+}(k^{-1/3}\xi_1)d\xi dk dr. \nonumber \hspace{-2cm} 
\label{Q_2}
\end{eqnarray}
This shows that the kernel $\kappa_Q$ is as follows 
 \begin{equation}
 \kappa_{Q}(x,t)=\kappa_{Q_1}(x,t)+\kappa_{Q_2}(x,t),
\end{equation}  
   and  
   \begin{eqnarray}
\kappa_{Q_2}(x,t)=-\frac{\pi}{2} \int e^{-c(x)|t-r|+i\psi_1(x,\xi,k)-ikr}[b^{\#}(x,\xi,k)-\frac{1}{c(x)} a^{\#}(x,\xi,k)]\frac{1}{A_+}(k^{-1/3}\xi_1)d\xi dk dr,  \nonumber \hspace{-2cm} 
\end{eqnarray}
with $a^{\#}(x,\xi,k)=(\frac{c^2(x)}{4}+\partial_{x}^{2})a(x,\xi,k)$ and $b^{\#}(x,\xi,k)=(\frac{c^2(x)}{4}+\partial_{x}^{2})b(x,\xi,k)$.   
To obtain the amplitude $a_Q$, we take  the inverse Fourier transform of $\kappa_Q$. The one related to  $\kappa_{Q_1}$ is given by \eqref{Rayhnahbel}. First, we can write that 
\begin{equation}
\kappa_{Q_2}(x,t)=-\frac{\pi}{2}e^{-c(x)|t|}\ast \kappa_A(x,t),
\end{equation} 
where $\kappa_A(x,t)$ is the kernel of the operator \eqref{AAAA}. 
Using the inverse Fourier transform, we find
\begin{equation}
\mathcal{F}^{-1}(\kappa_{Q_2})(x,k)=-\frac{\pi}{2}
\mathcal{F}^{-1}(e^{-c(x)|t|})(k)\mathcal{F}^{-1}(\kappa_A)(x,k).
\end{equation} 
Knowing that   $\mathcal{F}^{-1}(e^{-c(x)|t|})={\frac{1}{\pi}}\frac{c(x)}{c^2(x)+k^2}$, we obtain
\begin{eqnarray}
a_{Q_2}(x,k)=-\frac{1}{2}\frac{c(x)}{c^2(x)+k^2}\int e^{i\psi_{1}(x,\xi,k)}[b^{\#}(x,\xi,k)-\frac{1}{c(x)}a^{\#}(x,\xi,k)]\frac{1}{A_{+}}(k^{-1/3}\xi_{1})d\xi. \nonumber
\end{eqnarray}
Applying the change of variable $\xi=k\zeta$ with $\xi \in \mathbb{R}^n$, we get 
   \begin{eqnarray}
a_{Q_2}(x,k)=-\frac{1}{2}\frac{c(x)}{c^2(x)+k^2}\int e^{ik\psi_{2}(x,\zeta)}[b^{\#}_1(x,\zeta,k)-\frac{1}{c(x)}a^{\#}_1(x,\zeta,k)]\frac{1}{A_{+}}(k^{2/3}\zeta_{1})d\zeta,   \label{aQ222}
\end{eqnarray}
such that $a^{\#}_1(x,\zeta,k)=k^na^{\#}(x,k\zeta,k)$, $b^{\#}_1(x,\zeta,k)=k^nb^{\#}(x,k\zeta,k)$, and $\psi_2(x,\zeta)$ is described in     Theorem \ref{th4}. 
\end{proof}

The next theorem gives the asymptotic expansion  of $\eqref{aq22}$. 

\begin{theorem}
The asymptotic expression of the amplitude \eqref{aq22} is given by
\begin{eqnarray}
\nonumber a_Q(x,k)= a_{Q_{1}}(x,k)\hspace{12.0cm} \\[4pt]
-\frac{1}{2}\frac{c(x)}{c^2(x)+k^2}\sum_{p,l=0}^{P,L}k^{2/3-p-2l/3}\left[b^{\#}_{p,l}(\omega,x)- \frac{1}{c(x)}a^{\#}_{p,l}(\omega,x)\right]\psi^{(l)}(k^{1/3}Z)e^{ikx\omega}  +R_{P,L}(k), \label{2020}
\end{eqnarray}
where 
\begin{equation}\label{summm}
a_{Q_{1}}(x,k) = \sum_{p,l=0}^{P,L} k^{2/3-p-2l/3}\left((1-{\bf n} \cdot \omega)a_{p,l}(\omega,x)+ \mathcal{C}(x) b_{p,l}(\omega,x)\right)\psi^{(l)}(k^{1/3}Z). 
\end{equation}
Here  $a^{\#}_{p,l}(\omega,x)=(\frac{c^2(x)}{4}+\partial_{x}^{2})a_{p,l}(\omega,x)$, $b^{\#}_{p,l}(\omega,x)=(\frac{c^2(x)}{4}+\partial_{x}^{2})b_{p,l}(\omega,x)$, $p \in \{0,1..,P\}$, $l \in \{0,1..,L\}$,  
   $\omega$ is the incidence direction, and $c(x)>0$ is the curvature. In addition, $Z(\omega,x)$ is a continuous real function  that is positive on the illuminated region, negative on the shadow region, and vanishing on the shadow boundary. The functions $a_{p,l}$, $b_{p,l}$, $a^{\#}_{p,l}$, and $b^{\#}_{p,l}$ result from the expansion of the symbol  and the application of the stationary phase method.  
\end{theorem}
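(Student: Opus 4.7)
The plan is to reduce the theorem to the first order analysis that was already completed in the previous section. Since $a_Q(x,k)=a_{Q_1}(x,k)+a_{Q_2}(x,k)$ by \eqref{aq22}, and the expansion \eqref{summm} of $a_{Q_1}$ is precisely the statement already proved in the first order theorem, the only new work is to derive the corresponding expansion for $a_{Q_2}(x,k)$. This term has exactly the same structural form as the integrand of $a_Q$ in Theorem \ref{th4}, except that the symbols $a_1,b_1$ are replaced by $a_1^{\#},b_1^{\#}$, and there is a scalar prefactor $-\tfrac{1}{2}\tfrac{c(x)}{c^2(x)+k^2}$ that is independent of the integration variable $\zeta$. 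Thus the entire machinery of the first order expansion applies verbatim to the inner integral.

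First I would rewrite $\tfrac{1}{A_+}(k^{2/3}\zeta_1)$ using the identity \eqref{Assam} to produce the oscillatory integral
\begin{equation*}
a_{Q_2}(x,k)=-\tfrac{1}{2}\tfrac{c(x)}{c^2(x)+k^2}k^{1/3}\int e^{ikf(\zeta,t)}\bigl[b^{\#}_1(x,\zeta,k)-\tfrac{1}{c(x)}a^{\#}_1(x,\zeta,k)\bigr]\Psi(k^{1/3}t)\,dt\,d\zeta,
\end{equation*}
with the same phase $f(\zeta,t)=\psi_2(x,\zeta)+t\zeta_1+t^3/3$ that was analyzed in \cite{tay1}. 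Next I would expand the combined symbol
\begin{equation*}
B(x,\zeta,k):=b^{\#}_1(x,\zeta,k)-\tfrac{1}{c(x)}a^{\#}_1(x,\zeta,k)
\end{equation*}
as a finite sum $\sum_p k^{n/2+7/6-p}B_p(x,\zeta)$ of homogeneous components, exactly as in \eqref{A_1}, where $B_p(x,\zeta)=b^{\#}_p(x,\zeta)-\tfrac{1}{c(x)}a^{\#}_p(x,\zeta)$.

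The core step is the application of the stationary phase method to each
\begin{equation*}
\int e^{ikf(\zeta,t)}B_p(x,\zeta)\Psi(k^{1/3}t)\,dt\,d\zeta.
\end{equation*}
Because the phase $f$ is identical to the one appearing in the first order proof, the critical points and nondegeneracy conditions from \cite{tay1} carry over without change, and the outcome is a sum of the form $k^{-1/3}\sum_l k^{-2l/3-(n+1)/2}B_{p,l}(\omega,x)\Psi^{(l)}(k^{1/3}Z(\omega,x))$, with the coefficients $B_{p,l}$ obtained by differentiation in $\zeta_1$ at $\zeta=0$. By linearity of $\partial_{\zeta_1}^l|_{\zeta=0}$, these split as $B_{p,l}=b^{\#}_{p,l}-\tfrac{1}{c(x)}a^{\#}_{p,l}$, which gives precisely the bracket in \eqref{2020}. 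Multiplying by the prefactor $-\tfrac{1}{2}c(x)/(c^2(x)+k^2)$ and collecting powers of $k$ reproduces the second sum of \eqref{2020}.

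The one genuinely delicate point is the remainder estimate, since the prefactor $c(x)/(c^2(x)+k^2)$ is $O(k^{-2})$ as $k\to\infty$ and must be combined with the stationary phase remainder bound $|R_L(k)|\le C_L k^{-n/2-2L/3-3/2}$. I would track powers of $k$ carefully across the prefactor, the expansion \eqref{A_1} of $B$, and the stationary phase remainder, then take the minimum of the resulting exponents (mirroring the $\min(2L/3,P+1/3)$ bookkeeping in the first order case) to absorb the $a_{Q_2}$ contribution into a total remainder $R_{P,L}(k)$ of the same order as the one already present in the first order expansion. Identifying the explicit constants $a^{\#}_{p,l}=(\tfrac{c^2(x)}{4}+\partial_x^2)a_{p,l}$ and $b^{\#}_{p,l}=(\tfrac{c^2(x)}{4}+\partial_x^2)b_{p,l}$ follows immediately from the commutation of $\partial_{\zeta_1}^l|_{\zeta=0}$ with the $x$-differential operator $\tfrac{c^2(x)}{4}+\partial_x^2$ acting on the symbols $a_1,b_1$.
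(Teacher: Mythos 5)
Your proposal is correct and follows essentially the same route as the paper: split $a_Q=a_{Q_1}+a_{Q_2}$, use the identity \eqref{Assam} to turn $a_{Q_2}$ into an oscillatory integral with the same phase $f(\zeta,t)=\psi_2(x,\zeta)+t\zeta_1+t^3/3$, and invoke the stationary phase analysis of \cite{tay1} with unchanged critical points, the scalar prefactor $-\tfrac{1}{2}c(x)/(c^2(x)+k^2)$ passing through untouched. Your explicit symbol expansion of $b_1^{\#}-\tfrac{1}{c(x)}a_1^{\#}$ and the bookkeeping of the remainder are more detailed than the paper's rather terse argument, but they are refinements of, not departures from, its proof.
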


\begin{proof}
The derivation of \eqref{2020} is based on the application of the stationary phase method to the   amplitude $a_Q(x,k)=a_{Q_1}(x,k)+a_{Q_1}(x,k)$
where 
\begin{eqnarray}\label{kraht}
a_{Q_1}(x,k)=k^{1/3}\int e^{ik\psi_2(x,\zeta)+ikt\zeta_1 +ik \frac{t^3}{3}}\left[(1-{\bf n} \cdot \omega)a_{1}(x,\zeta ,k)+\mathcal{C}(x)b_{1}(x,\zeta ,k )\right] \Psi(k^{1/3}t)dt d\zeta,
\end{eqnarray}
see \eqref{aQ2}, and
\begin{eqnarray}
a_{Q_2}(x,k)=-\frac{1}{2}\frac{k^{1/3}c(x)}{c^2(x)+k^2}\int e^{ik\psi_{2}(x,\zeta)+ikt\zeta_1+ikt^3/3}[b^{\#}_1(x,\zeta,k)-\frac{1}{c(x)}a^{\#}_1(x,\zeta,k)]\Psi(k^{1/3}t) d\zeta dt, 
\end{eqnarray}
obtained using \eqref{Assam} in \eqref{aQ222}. The critical points are the same as the ones given in the paper \cite{tay1} and used when applying 
     the stationary phase method to \eqref{kraht} to derive  \eqref{sum.aQ}.
      Therefore, the latter method for the amplitude $a_Q$ leads to
      the asymptotic expression 
 \begin{eqnarray}
 \nonumber
a_Q(x,k) = a_{Q_{1}}(x,k)-\frac{1}{2}\frac{c(x)}{c^2(x)+k^2}\sum_{p,l=0}^{P,L}k^{2/3-p-2l/3}\left[b^{\#}_{p,l}(\omega,x)- \frac{1}{c(x)}a^{\#}_{p,l}(\omega,x)\right]\psi^{(l)}(k^{1/3}Z(\omega,x))e^{ikx \cdot \omega} \\
              +R_{P,L}(k),\nonumber 
\end{eqnarray}
such that $a_{Q_{1}}(x,k)$ is \eqref{summm} (first order expansion \eqref{sum.aQ}),  $a^{\#}_{p,l}(\omega,x)=(\frac{c^2(x)}{4}+\partial_{x}^{2})a_{p,l}(\omega,x)$, and $b^{\#}_{p,l}(\omega,x)=(\frac{c^2(x)}{4}+\partial_{x}^{2})b_{p,l}(\omega,x)$. The remainder $R_{P,L}$ satisfies 
\begin{equation}
|R_{P,L}(k)|\leq C_{PL}k^{-min(2L/3,P+1/3)},
\end{equation} 
and $C_{PL}$ is a constant depending on $P$ and $L$. 

\end{proof}

\begin{remark}
As is mentioned in Remark \ref{rem1}, the second order condition \eqref{appr02} is derived in two dimensions. 
If the three dimensional absorbing boundary condition 
\begin{equation}
{\partial_{\bf n} w^s(x,k)}=-(ik-c(x))w^i(x,k)-\frac{c^2(x)(c(x)+ik)}{2(c(x)^2+k^2)}\partial^2_x w^i(x,k),\label{condition3}
\end{equation}
is used (see condition (29) in\cite{Turkel1}),  then we obtain 
\begin{eqnarray}
a_Q(x,k) \sim a_{Q_{1}}(x,k)-\frac{1}{2}\frac{c^2(x)}{c^2(x)+k^2}\sum_{p,l=0}^{P,L}k^{2/3-p-2l/3}\left[ c(x)\widetilde{b}_{p,l}(\omega,x)- \widetilde{a}_{p,l}(\omega,x)\right]\psi^{(l)}(k^{1/3}Z(\omega,x)),  
\end{eqnarray}
where $a_{Q_{1}}(x,k)$ is given by \eqref{summm} with $\mathcal{C}(x)=c(x)$,  
 $\widetilde{a}_{p,l}(\omega,x)=\partial_{x}^{2}a_{p,l}(\omega,x)$, and $ \widetilde{b}_{p,l}(\omega,x)=\partial_{x}^{2}b_{p,l}(\omega,x)$.
\end{remark}

\section{Conclusion}

  In this paper we have derived some new expansions of the normal derivative of the total field solution of the Helmholtz equation.  The original expansions are based on a pseudo-differential decomposition of the Dirichlet to Neumann operator. In this work, we have used approximations of this operator to derive new expansions.
One of the goals is to facilitate construction  
of a new  ansatz class that can be used in the development of numerical solvers that can produce  more accurate solutions.

\section{Acknowledgments}

Y. Boubendir gratefully acknowledges support from NSF through grant No. DMS-1319720.

\end{document}